\newcommand{\SetAlgorithmStyle}{
  \setcounter{AlgoLine}{0}
  \SetKwData{Left}{left}\SetKwData{This}{this}\SetKwData{Up}{up}
  \SetKwInOut{Input}{Input}
  \SetKwInOut{Output}{Output}
  \ResetInOut{input}
  \SetKwComment{tcp}{//}{}
  \SetKwFor{For}{for}{}{end}
  \SetKwFor{Fun}{function}{}{end}
  \SetArgSty{}
  \DontPrintSemicolon
}
\DeclareMathAlphabet{\mathpzc}{OT1}{pzc}{m}{it}
\newtheorem{prop}{Proposition}[section]
\newtheorem{defi}[prop]{Definition}
\newtheorem{lemma}[prop]{Lemma}
\newtheorem{theo}[prop]{Theorem}
\newtheorem{remark}[prop]{Remark}
\newtheorem{example}[prop]{Example}
\newcommand{\set}[2]{\left\{#1\,\middle|\,#2\right\}}
\newcommand{\bset}[2]{\big\{#1\,\big|\,#2\big\}}
\newcommand{\bigset}[2]{\bigl\{#1\,\big|\,#2\bigr\}}
\newcommand{\N}{\mathbb{N}}
\newcommand{\R}{\mathbb{R}}
\newcommand{\C}{\mathbb{C}}
\newcommand{\eps}{\varepsilon}
\newcommand{\e}{\mathrm{e}}
\newcommand{\iu}{\mathrm{i}}
\newcommand{\A}{\mathcal{A}}
\renewcommand{\H}{\mathcal{H}}
\newcommand{\So}{S_{\H_1}}
\newcommand{\St}{S_{\H_2}}
\newcommand{\dd}{\mathrm{d}}
\newcommand{\dt}{\frac{\dd}{\dd t}}
\newcommand{\E}{\mathbb{E}}
\newcommand{\trace}{\mathrm{trace}}
\newcommand{\dist}{\mathrm{dist}}
\newcommand{\dH}{\mathrm{d}_\mathrm{H}}
\newcommand{\norm}[1]{\lVert #1 \rVert}
\newcommand{\U}{\mathcal{U}}
\renewcommand{\a}{a_{\varphi}}
\renewcommand{\b}{b_{\psi,\varphi}}
\renewcommand{\c}{c_{\varphi,\psi}}
\renewcommand{\d}{d_{\psi}}
\newcommand{\lam}{\lambda_{\varphi,\psi}}
\renewcommand{\phi}{\varphi}
\newcommand{\m}{\mathrm{m}}
\title{Computing the Quadratic Numerical Range}
\author{Birgit Jacob, Lukas Vorberg, Christian Wyss}
\begin{document}

\maketitle 

\begin{abstract}
A novel algorithm for the computation of the quadratic numerical range is presented and exemplified yielding much better results in less time compared to the random vector sampling method. Furthermore, a bound on the probability for the random vector sampling method to produce a point exceeding a neighborhood of the expectation value in dependence on norm and size of the matrix is given.
\end{abstract}

\section{Introduction}

Knowledge about the eigenvalues of a matrix is a powerful tool in analysis and numerics and has a wide range of applications such as stability analysis of linear dynamical systems. Unfortunately, they are very sensible to model uncertainties and their exact determination can come at a very high computational cost which is why supersets that are a lot easier to compute while still preserving important information have become an attractive topic of ongoing research.

A well-established and thoroughly studied superset is the numerical range of a matrix $\A\in\C^{n\times n}$ defined by
\begin{equation*}
W(\A) = \left\{\langle \A x,x\rangle \mid \norm{x} = 1\right\},
\end{equation*}
see \cite{GustafsonRao, HornJohnson, Kato95}. Here, $\langle\cdot,\cdot\rangle$ denotes the scalar product of $\C^n$. The numerical range is always convex which has advantages when it comes to determining whether or not all eigenvalues are contained in a half-plane or when an algorithm for its computation is to be found. See \cite{CowenHarel, Johnson, LoiselMaxwell} for effective algorithms. But the convexity has its drawbacks as well because spectral gaps can not be detected.

In \cite{LangerTretter} Langer and Tretter introduced the quadratic numerical range (QNR) as a new concept to enclose the spectrum of a block operator matrix in a Hilbert space. In the matrix case --- depending on a decomposition $\C^{n} = \C^{n_{1}}\oplus\C^{n_{2}}\eqqcolon \H_{1}\oplus\H_{2}$ such that
\begin{equation*}
  \A = \begin{bmatrix}A & B\\C & D\end{bmatrix}
\end{equation*}
with $A,B,C,D$ acting in and between $\H_{1}$ and $\H_{2}$ --- the QNR is defined as the set of eigenvalues of the $2\times 2$ matrices
\begin{equation*}
\begin{bmatrix} \langle Ax,x\rangle & \langle By,x\rangle \\ \langle Cx,y\rangle & \langle Dy,y\rangle \end{bmatrix}
  \end{equation*}
  with $x\in \H_1$, $y\in\H_{2}$, $\norm{x}=\norm{y}=1$. This provides a closed subset of the numerical range that still encloses all the eigenvalues of $\A$, is not necessarily convex and consists of at most two connected components which need not be convex either, see \cite{LangerMarkusMatsaevTretter} and the monograph \cite{Tretter}, where many more properties are proven as well. For applications of the QNR, we refer to \cite{FrommerJacobKahlWyssZwaan}, \cite{JacobTretterTrunkVogt} and \cite{Linden} where the superset is exploited for Krylov type methods, damped systems and the location of zeros of polynomials. In \cite{muh-mar12} and \cite{muh-mar13} approximation schemes for possibly unbounded operators are established and convergence theorems are proven relating the QNR of an operator to the QNR of its finite-dimensional discretizations.

  The development of effective algorithms for the numerical computation of the QNR faces several challenges, e.g. because of the lack of convexity and ideas approved for the numerical range can not be adapted straightforwardly. So far the only method for computing the quadratic numerical range is based on random vector sampling, see \cite{Fazlollahi} for a Matlab implementation. This method however comes with various disadvantages. We show that especially for higher dimensional matrices the computed points will very likely cluster in a small subset of each component making convergence very slow and expensive. We present a new approach for the computation of the quadratic numerical range which is more deterministic than random vector sampling and based on the idea of seeking the boundary by maximization of an objective function.
  Multiple examples illustrate that in this way much better results are obtained in less time.

  This article is organized as follows: In \Cref{sect:QNR} we recall the definition of the quadratic numerical range and examine curves therein. \Cref{sect:algorithm} contains the algorithm for the computation of the QNR alongside explanations for the chosen procedure. This algorithm is then exemplified in \Cref{sect:examples} where it is compared to the random vector sampling method. In \Cref{sect:concentration} we show that the probability of a point in the QNR generated by the random vector method to be outside of a small neighborhood of the expected value decays exponentially with an increase of the dimension of the matrix when its norm stays constant.

  We conclude this introduction with some remarks on the notation used.
  The real and imaginary parts of a complex number $z$ are denoted by $\Re z$ and $\Im z$ respectively and $\C_+\coloneqq \set{z\in\C}{\Re z >0}$ and $\C_-\coloneqq \set{z\in\C}{\Re z <0}$ are the right and left half planes in $\C$.
  $\norm{\cdot}$ denotes either the $2$-norm of a vector or the operator norm of a matrix induced by the $2$-norm. For a square matrix $\A$, $\sigma(\A)$ is the set of its eigenvalues and $\varrho(\A) \coloneqq \C\setminus\sigma(\A)$. Furthermore, $\partial K$ denotes the boundary of a set $K\subset\C$ and for a some $\lambda\in\C$ and $\eps>0$ we define $B_\eps(\lambda)\coloneqq \set{z\in\C}{|\lambda-z|<\eps}$.

\section{The Quadratic Numerical Range and curves therein}\label{sect:QNR}

Throughout this article we consider matrices $\A\in\C^{n\times n}$ with a block decomposition of the form
\begin{equation*}
  \A = \begin{bmatrix}
    A & B\\
    C & D
  \end{bmatrix}
\end{equation*}
where $A\colon\H_1\to \H_1$, $B\colon\H_2\to\H_1$, $C\colon\H_1\to\H_2$, $D\colon\H_2\to\H_2$ and $\H_1\oplus \H_2 \coloneqq \C^{n_{1}}\oplus\C^{n_{2}} = \C^n$. Note that every matrix can be written in such a form once a decomposition $\C^n = \H_{1}\oplus\H_{2}$ is chosen.

\begin{defi}
  The \emph{quadratic numerical range (QNR)} is given by
  \begin{equation*}
    W^2(\A) = \bigcup_{x\in S_{\H_1}, y\in S_{\H_2}} \sigma\left(\begin{bmatrix} \langle Ax,x\rangle & \langle By,x\rangle \\ \langle Cx,y\rangle & \langle Dy,y\rangle \end{bmatrix}\right),
  \end{equation*}
  where $S_{\H_i} = \{x\in \H_i \mid \norm{x}=1\}$, $i=1,2$.
\end{defi}

In other words, the QNR consists of the solutions $\lambda$ of the quadratic equations
\begin{equation}\label{eq:quadratic}
\lambda^2-(\langle Ax,x\rangle+\langle Dy,y\rangle)\lambda + \langle Ax,x\rangle\langle Dy,y\rangle-\langle By,x\rangle\langle Cx,y\rangle = 0
\end{equation}
with $(x,y)\in\So\times\St$.

\begin{remark}
$W^{2}(\A)$ contains all eigenvalues of $\A$ and consists of at most two (connected) components, see \cite{LangerMarkusMatsaevTretter}.
\end{remark}

In order to shorten the notation we use the abbreviations
\begin{equation}\label{eq:abcd}
     M_{x,y} \coloneqq \begin{bmatrix} a_x & b_{y,x} \\ c_{x,y} & d_y \end{bmatrix} \coloneqq \begin{bmatrix} \langle Ax,x\rangle & \langle By,x\rangle \\ \langle Cx,y\rangle & \langle Dy,y\rangle \end{bmatrix}\in\C^{2\times 2}
   \end{equation}
   for $(x,y)\in\So\times\St$.

   \begin{prop}\label{thm:complex_root}
     Let $(x_0,y_0)\in\So\times\St$ be such that $\sigma(M_{x_0,y_0})$ consists of two simple eigenvalues. Then there exists a neighborhood $U\subset\So\times\St$ of $(x_0,y_0)$ such that $\bigcup_{(x,y)\in U}\sigma(M_{x,y})$ consists of two disconnected components $W_1$ and $W_2$ that can be separated by a straight line and there exists $\theta_0\in [0,2\pi]$ and a branch of the complex root $\sqrt{\cdot}\colon G\to\C$ with $G = \C\setminus\{r\e^{\iu\theta_0}  \mid  r\geq 0\}$ such that
  \begin{equation}\label{eq:W1}
    W_1 = \set{\frac{a_x+d_y}{2} + \sqrt{\left(\frac{a_x-d_y}{2}\right)^2+b_{y,x}c_{x,y}}}{(x,y)\in U}
  \end{equation}
  and
  \begin{equation}\label{eq:W2}
    W_2 = \set{\frac{a_x+d_y}{2} - \sqrt{\left(\frac{a_x-d_y}{2}\right)^2+b_{y,x}c_{x,y}}}{(x,y)\in U}.
  \end{equation}
\end{prop}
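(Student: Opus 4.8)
The plan is to build everything around the discriminant function
\[
  \Delta(x,y) \coloneqq \left(\frac{a_x-d_y}{2}\right)^2 + b_{y,x}c_{x,y},
\]
which is continuous on $\So\times\St$ since each matrix entry depends continuously on $(x,y)$. The two eigenvalues of $M_{x,y}$ are $\tfrac{a_x+d_y}{2}\pm\sqrt{\Delta(x,y)}$, so the hypothesis that $\sigma(M_{x_0,y_0})$ consists of two simple eigenvalues is exactly the statement $\Delta(x_0,y_0)\neq 0$. Write $\Delta(x_0,y_0)=\rho_0\e^{\iu\psi_0}$ with $\rho_0>0$. First I would choose $\theta_0$ to be any ray direction avoiding a neighborhood of $\psi_0$ — concretely $\theta_0 = \psi_0+\pi$ — so that $\Delta(x_0,y_0)$ lies in the slit plane $G = \C\setminus\{r\e^{\iu\theta_0}\mid r\ge 0\}$, on which a holomorphic branch of $\sqrt{\cdot}$ exists. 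By continuity of $\Delta$ there is a neighborhood $U$ of $(x_0,y_0)$ in $\So\times\St$ with $\Delta(U)\subset G$ (indeed one can shrink $U$ so that $\Delta(U)$ is contained in a small disc around $\rho_0\e^{\iu\psi_0}$ bounded away from the slit). On this $U$ the two maps
\[
  \lambda_\pm(x,y) = \frac{a_x+d_y}{2}\pm\sqrt{\Delta(x,y)}
\]
are well-defined and continuous, and I would \emph{define} $W_1 \coloneqq \lambda_+(U)$ and $W_2 \coloneqq \lambda_-(U)$; this gives \eqref{eq:W1} and \eqref{eq:W2} essentially by construction, and $\bigcup_{(x,y)\in U}\sigma(M_{x,y}) = W_1\cup W_2$ since every point of $\sigma(M_{x,y})$ is either $\lambda_+(x,y)$ or $\lambda_-(x,y)$.

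Next I would establish the separation by a straight line. On $U$ we have $\sqrt{\Delta(x,y)}$ taking values in a small region: since $\Delta(U)$ can be made to lie in a disc $B_\delta(\rho_0\e^{\iu\psi_0})$ with $\delta$ small relative to $\rho_0$, its image under the chosen branch of $\sqrt{\cdot}$ lies in a small neighborhood $N$ of the single point $w_0 \coloneqq \sqrt{\rho_0}\,\e^{\iu\psi_0/2}$ (continuity of the branch), and by further shrinking $U$ we may assume $N \subset B_\eps(w_0)$ with, say, $\eps < \tfrac12|w_0| = \tfrac12\sqrt{\rho_0}$. Then $\sqrt{\Delta(x,y)}$ and $-\sqrt{\Delta(x',y')}$ lie in the disjoint discs $B_\eps(w_0)$ and $B_\eps(-w_0)$ for all pairs of parameters; also $\tfrac{a_x+d_y}{2}$ ranges over a small set. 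Concretely, fixing the value $s_0 \coloneqq \tfrac{a_{x_0}+d_{y_0}}{2}$, continuity lets us shrink $U$ so that $|\tfrac{a_x+d_y}{2}-s_0| < \tfrac{\eps}{2}$ on $U$; then $W_1 \subset B_{3\eps/2}(s_0+w_0)$ and $W_2\subset B_{3\eps/2}(s_0-w_0)$, and since $|w_0| = \sqrt{\rho_0} > 2\eps$ these two balls are disjoint and can be separated by the perpendicular bisector of the segment from $s_0+w_0$ to $s_0-w_0$, i.e.\ by a straight line orthogonal to $w_0$ through $s_0$. I would record this line explicitly to make the ``separated by a straight line'' claim precise.

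The routine parts are the continuity statements and the book-keeping of the radii $\delta,\eps$; I would not grind through these. The one point requiring a little care — the main obstacle — is making sure the \emph{same} branch of the square root is used consistently for all $(x,y)\in U$ and that $\Delta$ genuinely stays inside the slit domain $G$: this is why the choice $\theta_0 = \psi_0+\pi$ (the ``opposite'' ray) together with shrinking $U$ so that $\Delta(U)$ sits in a small disc around $\Delta(x_0,y_0)$ is essential; a careless choice of slit could let $\Delta(U)$ cross the branch cut and destroy continuity of $\lambda_\pm$. Once $\theta_0$ and $U$ are chosen so that $\overline{B_\delta(\Delta(x_0,y_0))}\subset G$, the rest is a matter of tracking how small neighborhoods map through the (uniformly continuous on compacta) square root and through the affine part $\tfrac{a_x+d_y}{2}$. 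A remark worth adding: the fact that $W^2(\A)$ globally has at most two components (cited earlier) is consistent with, but not needed for, this local statement — here everything is genuinely local around $(x_0,y_0)$.
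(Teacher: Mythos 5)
Your proof is correct but takes a genuinely different route from the paper's. The paper argues ``from the outside in'': it first cites eigenvalue continuity (Kato) to get a neighborhood $U$ on which the eigenvalues split into two components separable by a straight line, then reduces WLOG (by rotating and shifting $\A$) to separation by the imaginary axis, observes that the root $q$ with $\lambda_1=\tfrac{a_x+d_y}{2}+q$, $\lambda_2=\tfrac{a_x+d_y}{2}-q$ then satisfies $\Re q>0$ so $q^2\notin\R_{\le 0}$, and hence the principal branch on $\C\setminus\R_{\le 0}$ works; the slit $\theta_0$ for the general case is recovered by undoing the rotation (the radicant transforms by $\e^{2\iu\theta}$). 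You instead work ``from the inside out'': you start from the discriminant $\Delta$, pick $\theta_0=\psi_0+\pi$ opposite to $\arg\Delta(x_0,y_0)$ so that the slit avoids $\Delta(x_0,y_0)$, use continuity of $\Delta$ (rather than of eigenvalues in general) to shrink $U$ so that $\Delta(U)$ stays in a disc inside the slit plane, and then derive the disjointness of $W_1,W_2$ and an explicit separating line (the perpendicular bisector through $s_0$ orthogonal to $w_0$) quantitatively, from $|w_0|=\sqrt{\rho_0}>2\eps$. Your version is more self-contained and makes explicit how the size of $U$ scales with $|\Delta(x_0,y_0)|$, at the cost of a bit more $\eps$--$\delta$ bookkeeping; the paper's version is shorter because it offloads the topology to the cited perturbation result and lets the branch fall out of the separation direction, but it needs the slightly fiddly WLOG rotation/shift reduction that you avoid entirely. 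Both are sound.
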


\begin{proof}
From \cite[Theorem 2.5.1]{Kato95} we have that the eigenvalues of a matrix depend continuously on its entries. Therefore, there exists a neighborhood $U\subset\So\times\St$ of $(x_0,y_0)$ such that $\bigcup_{(x,y)\in U}\sigma(M_{x,y})$ consists of two disconnected components $W_1$ and $W_2$ that can be separated by a straight line. Without loss of generality, we assume that $W_1$ and $W_2$ are separated by the imaginary axis because considering the shifted and rotated matrix $\e^{\iu\theta}(\A + zI)$ for some $\theta\in[0,2\pi]$ and $z\in\C$ would lead to the computation of the eigenvalues of
  \begin{equation*}
    \begin{bmatrix} \tilde{a}_x & \tilde{b}_{y,x} \\ \tilde{c}_{x,y} & \tilde{d}_y \end{bmatrix} = \e^{\iu\theta}\begin{bmatrix} a_x+z & b_{y,x} \\ c_{x,y} & d_y+z \end{bmatrix},
  \end{equation*}
  so the radicant would be given by $\left(\frac{\tilde{a}_x-\tilde{d}_y}{2}\right)^2+\tilde{b}_{y,x}\tilde{c}_{x,y} = \e^{2\iu\theta}\left(\left(\frac{a_x-d_y}{2}\right)^2+b_{y,x}c_{x,y}\right)$ and thus $\tilde{G} = \e^{2\iu\theta}G$.\\
  So let us assume $W_1\subset\C_+$ and $W_2\subset\C_-$ and let $\lambda_1\in W_1$ and $\lambda_2\in W_2 $ be eigenvalues of $M_{x,y}$ for given $(x,y)\in U$, i.e.~$\Re\lambda_2<0<\Re\lambda_1$ and $\lambda_1$ and $\lambda_2$ are solutions of
  \begin{equation*}
    (a_x-\lambda)(d_y-\lambda)-b_{y,x}c_{x,y} = 0 \iff \left(\lambda-\frac{a_x+d_y}{2}\right)^2 = \left(\frac{a_x-d_y}{2}\right)^2+b_{y,x}c_{x,y}.
  \end{equation*}
  Then there is a solution $q\in\C$ of $q^2 = \left(\frac{a_x-d_y}{2}\right)^2+b_{y,x}c_{x,y}$ such that $\lambda_1 = \frac{a_x+d_y}{2} + q$ and $\lambda_2 = \frac{a_x+d_y}{2}-q$.  
  It follows
  \begin{equation*}
    0<\Re(\lambda_1-\lambda_2) = 2\Re q
  \end{equation*}
  and we conclude that $q\in\C_+$ and thus $q^2 \in \C\setminus \R_{\leq 0}$. So by defining $G\coloneqq\C\setminus\R_{\leq0} = \C\setminus \{r\e^{\iu\pi}  \mid  r\geq 0\}$ and $\sqrt{\cdot}\colon G \to \C$ as the principal branch of the complex root with $\Re\sqrt{z}>0$ for all $z\in G$ we obtain
  \begin{equation*}
    \lambda_1 = \frac{a_x+d_y}{2} + \sqrt{\left(\frac{a_x-d_y}{2}\right)^2+b_{y,x}c_{x,y}} \quad \text{ and } \quad \lambda_2 = \frac{a_x+d_y}{2} - \sqrt{\left(\frac{a_x-d_y}{2}\right)^2+b_{y,x}c_{x,y}}.
  \end{equation*}
\end{proof}

\begin{remark}
Note, that the assumption in \Cref{thm:complex_root} on $\sigma(M_{x_0,y_0})$ to consist of two simple eigenvalues is fulfilled for every $(x_0,y_0)\in\So\times\St$ if $W^2(\A)$ consists of two disconnected components. Furthermore, we have $U=\So\times\St$ if the two components of $W^2(\A)$ can be separated by a straight line. In this case, the formulas in \eqref{eq:W1} and \eqref{eq:W2} can be used to match each of the two eigenvalues of a matrix $M_{x,y}$ to a specific component.
\end{remark}

In the following we will consider curves in the QNR, i.e.~continuous mappings $\lambda_{\varphi,\psi}$ from an interval $I$ into $W^2(\A)$ which are generated from continuous curves $\varphi\colon I\to\So$ and $\psi\colon I\to\St$ such that $\lambda_{\varphi,\psi}(t)$ solves \eqref{eq:quadratic} with $\phi(t)$ in place of $x$ and $\psi(t)$ in place of $y$ for all $t\in I$. We are interested in the derivative of such a curve in the QNR and in order to shorten the notation in the upcoming formulas we will henceforth and similarly to \eqref{eq:abcd} use the abbreviations
   \begin{equation*}
     M_{\phi,\psi}\coloneqq\begin{bmatrix} \a & \b \\ \c & \d \end{bmatrix} \coloneqq \begin{bmatrix} \langle A\varphi,\varphi\rangle & \langle B\psi,\varphi\rangle \\ \langle C\varphi,\psi\rangle & \langle D\psi,\psi\rangle \end{bmatrix}\colon I\to \C^{2\times 2}
   \end{equation*}
   for curves $(\phi,\psi)\colon I\to\So\times\St$.

\begin{theo}\label{thm:lambda_dot}
Let $(x_0,y_0)\in\So\times\St$ be such that $\sigma(M_{x_0,y_0})$ consists of two simple eigenvalues. Let $t_1>0$ and $(\varphi,\psi)\colon [0,t_1]\to \So\times\St$, $t\mapsto (\varphi(t),\psi(t))$, with $(\phi(0),\psi(0)) = (x_0,y_0)$ be a differentiable curve in $\So\times\St$. Then there exists a $0<t_0\leq t_1$ such that $\sigma(M_{\phi,\psi})\colon [0,t_0]\to\C^2$ consists of two differentiable curves. Denote by $\lam\colon [0,t_0]\to \C$ one of these two curves. Then
  \begin{equation*}
    \dt{\lam} = \left\langle S(\varphi,\psi,\lam)\begin{bmatrix}\varphi\\\psi\end{bmatrix},\begin{bmatrix}\dot{\varphi}\\ \dot{\psi} \end{bmatrix}\right\rangle + \left\langle S(\varphi,\psi,\lam)\begin{bmatrix}\dot{\varphi}\\ \dot{\psi}\end{bmatrix},\begin{bmatrix}\varphi\\\psi\end{bmatrix}\right\rangle
\end{equation*}
with
\begin{equation*}
  S(\varphi,\psi,\lam) = \frac{1}{2\lam-\a-\d}\begin{bmatrix} (\lam-\d)A & \c B \\ \b C & (\lam-\a)D\end{bmatrix}.
\end{equation*}
\end{theo}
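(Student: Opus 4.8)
The plan is to first secure the differentiability of the two eigenvalue branches of $M_{\varphi,\psi}$ near $t=0$ and then to obtain the formula for $\dt\lam$ by implicitly differentiating the characteristic equation of $M_{\varphi,\psi}$.

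\emph{Differentiable branches.} Since $(\varphi,\psi)$ is differentiable and $A,B,C,D$ are fixed, the product rule for the sesquilinear forms shows that $\a,\b,\c,\d$ are differentiable in $t$, with
\begin{equation*}
  \dt\a = \langle A\dot{\varphi},\varphi\rangle + \langle A\varphi,\dot{\varphi}\rangle, \qquad \dt\b = \langle B\dot{\psi},\varphi\rangle + \langle B\psi,\dot{\varphi}\rangle,
\end{equation*}
\begin{equation*}
  \dt\c = \langle C\dot{\varphi},\psi\rangle + \langle C\varphi,\dot{\psi}\rangle, \qquad \dt\d = \langle D\dot{\psi},\psi\rangle + \langle D\psi,\dot{\psi}\rangle.
\end{equation*}
Hence the characteristic polynomial $p(t,\lambda) = \lambda^2 - (\a+\d)\lambda + \a\d - \b\c$ is differentiable in $(t,\lambda)$, with coefficients differentiable in $t$. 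At $t=0$ the two roots are simple, so $\partial_\lambda p(0,\lambda) = 2\lambda - a_{x_0} - d_{y_0} \neq 0$ at each of them; identifying $\C$ with $\R^2$, multiplication by this nonzero complex number is an invertible linear map, so the implicit function theorem yields for each root a differentiable function satisfying $p(t,\lam(t)) = 0$ near $t=0$. Shrinking the interval to some $[0,t_0]$ so that the two branches stay distinct, $\sigma(M_{\varphi,\psi})$ consists precisely of these two differentiable curves on $[0,t_0]$; alternatively one may invoke the differentiable dependence of simple eigenvalues on the matrix entries from perturbation theory, cf.~\cite{Kato95}.

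\emph{Implicit differentiation.} Fix one branch $\lam$. Differentiating $p(t,\lam(t)) = 0$ in $t$ gives
\begin{equation*}
  (2\lam - \a - \d)\,\dt\lam = (\lam - \d)\,\dt\a + (\lam - \a)\,\dt\d + \c\,\dt\b + \b\,\dt\c,
\end{equation*}
and since $2\lam - \a - \d \neq 0$ on $[0,t_0]$ by simplicity of the eigenvalues, this determines $\dt\lam$. Inserting the four product-rule expressions above writes $\dt\lam$ as $\frac{1}{2\lam - \a - \d}$ times a sum of eight inner products.

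\emph{Identifying the bilinear form.} It remains to verify that this equals $\bigl\langle S(\varphi,\psi,\lam)\smallmat{\varphi\\\psi},\smallmat{\dot{\varphi}\\\dot{\psi}}\bigr\rangle + \bigl\langle S(\varphi,\psi,\lam)\smallmat{\dot{\varphi}\\\dot{\psi}},\smallmat{\varphi\\\psi}\bigr\rangle$. Using the block form of the inner product on $\C^n = \H_1\oplus\H_2$ one computes
\begin{equation*}
  \Bigl\langle S(\varphi,\psi,\lam)\begin{bmatrix}\varphi\\\psi\end{bmatrix},\begin{bmatrix}\dot{\varphi}\\\dot{\psi}\end{bmatrix}\Bigr\rangle = \frac{1}{2\lam-\a-\d}\Bigl[(\lam-\d)\langle A\varphi,\dot{\varphi}\rangle + \c\langle B\psi,\dot{\varphi}\rangle + \b\langle C\varphi,\dot{\psi}\rangle + (\lam-\a)\langle D\psi,\dot{\psi}\rangle\Bigr],
\end{equation*}
and the second summand arises from this by interchanging $(\varphi,\psi)$ and $(\dot{\varphi},\dot{\psi})$; note that the scalar $\frac{1}{2\lam-\a-\d}$ and the coefficients $\lam-\d$, $\lam-\a$, $\c$, $\b$ all occupy the first — hence linear — slot of the inner product, so that no complex conjugates are produced. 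Adding the two summands and comparing term by term with the output of the implicit differentiation, the $(\lam-\d)$-terms recombine to $(\lam-\d)\dt\a$, the $(\lam-\a)$-terms to $(\lam-\a)\dt\d$, the $\c$-terms to $\c\,\dt\b$, and the $\b$-terms to $\b\,\dt\c$, which is precisely $(2\lam-\a-\d)\dt\lam$. This proves the claim.

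The calculations in the last two steps are routine bookkeeping; the only point requiring genuine care is the first step, namely passing from ``simple eigenvalue at $t=0$'' to ``differentiable eigenvalue branch near $t=0$'', which for the $2\times 2$ matrix $M_{\varphi,\psi}$ however reduces at once to the implicit function theorem applied to its scalar characteristic polynomial.
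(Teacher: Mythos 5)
Your proof is correct and follows essentially the same route as the paper: both differentiate the characteristic equation $(\lam-\a)(\lam-\d)-\b\c=0$, substitute the product-rule expressions for $\dt\a,\dt\b,\dt\c,\dt\d$, and recognize the result as the stated block bilinear form in $S$, dividing by $2\lam-\a-\d$ which is nonzero because the eigenvalues stay simple on $[0,t_0]$. The only minor deviation is that you obtain the differentiable eigenvalue branches directly from the implicit function theorem on the scalar characteristic polynomial, whereas the paper cites its Proposition~\ref{thm:complex_root} (the explicit square-root formulas) for the same conclusion; this is a bookkeeping difference, not a different argument.
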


\begin{proof}
  From \Cref{thm:complex_root} we have that there exists a neighborhood $U\subset\So\times\St$ of $(x_0,y_0)$ such that $\bigcup_{(x,y)\in U}\sigma(M_{x,y})$ consists of two disconnected components $W_1$ and $W_2$ that can be described in a differentiable dependence on the $(x,y)\in U$ via the formulas in \eqref{eq:W1} and \eqref{eq:W2}. We therefore obtain that $\sigma(M_{\phi,\psi})\colon [0,t_0]\to\C^2$ consists of two differentiable curves by choosing $t_0>0$ such that $(\phi(t),\psi(t))\in U$ for all $t\in[0,t_0]$.\\
  An eigenvalue $\lam(t)$ of $M_{\phi,\psi}(t)$, $t\in [0,t_0]$, satisfies
  \begin{equation*}
    (\lam(t)-\a(t))(\lam(t)-\d(t))-\b(t)\c(t) = 0,
  \end{equation*}
  so that upon differentiation we get
  \begin{equation}\label{eq:lambda_diff}
      \left(\dt{\lam}-\dt{\a}\right)(\lam-\d) + (\lam-\a)\left(\dt{\lam}-\dt{\d}\right) - \dt{\b}\c - \b\dt{\c} = 0.
    \end{equation}
  Herein
  \begin{align*}
    &\dt{\a} = \langle A\dot{\varphi},\varphi\rangle + \langle A\varphi,\dot{\varphi}\rangle\\
    &\dt{\b} = \langle B\dot{\psi},\varphi\rangle + \langle B\psi,\dot{\varphi}\rangle\\
    &\dt{\c} = \langle C\dot{\varphi},\psi\rangle + \langle C\varphi,\dot{\psi}\rangle\\
    &\dt{\d} = \langle D\dot{\psi},\psi\rangle + \langle D\psi,\dot{\psi}\rangle,
  \end{align*}
  which transforms \eqref{eq:lambda_diff} into
  \begin{align}\label{eq:diff}
    \begin{split}
    (2\lam-\a-\d)\dt{\lam} =& \left\langle \begin{bmatrix} (\lam-\d)A & \c B \\ \b C & (\lam-\a)D\end{bmatrix}\begin{bmatrix}\varphi\\\psi\end{bmatrix},\begin{bmatrix}\dot{\varphi}\\ \dot{\psi} \end{bmatrix}\right\rangle \\
    &+ \left\langle \begin{bmatrix} (\lam-\d)A & \c B \\ \b C & (\lam-\a)D\end{bmatrix}\begin{bmatrix}\dot{\varphi}\\ \dot{\psi}\end{bmatrix},\begin{bmatrix}\varphi\\\psi\end{bmatrix}\right\rangle.
    \end{split}
  \end{align}
  Now the fact that $\bigcup_{t\in[0,t_0]}\sigma(M_{\phi,\psi}(t))$ consists of two disconnected components
  implies that $\lambda_{\phi,\psi}(t) \neq \frac{\a+\d}{2}(t)$ for all $t\in[0,t_0]$ because the sum of the eigenvalues of $M_{\phi,\psi}(t)$ is equal to the sum of its diagonal entries. This allows us to divide \eqref{eq:diff} by $2\lambda_{\phi,\psi}-\a-\d$, yielding the desired formula for the derivative of $\lambda_{\phi,\psi}$.
\end{proof}

\section{An Algorithm for Computing the QNR}\label{sect:algorithm}

Our goal is to develop an algorithm for the computation of the quadratic numerical range that does not only rely on random vector sampling. This means, that we want to make a choice on the utilized vectors $(x,y)\in\So\times\St$ such that the image resulting from the point cloud of eigenvalues of the matrices $M_{x,y}$ is a very good approximation of the image of the actual QNR even for a small number of vectors. We will therefore place particular emphasis on those $(x,y)\in\So\times\St$ that correspond to boundary points of $W^2(\A)$.

\subsection{Seeking the Boundary}\label{sect:seekboundary}

Starting at a given point $\lambda_{0}\in W^{2}(\A)$ with corresponding $(x_0,y_0)\in\So\times\St$ we wish to gradually compute a sequence $(x_n,y_n)_{n\in\N}\subset\So\times\St$ such that the associated $(\lambda_{n})_{n\in\N}$ in the quadratic numerical range converge towards the boundary. In order to do so we first have to declare a direction in which we want to approach the boundary, so in the following, we will therefore start by focusing on moving parallel to the positive real axis since every other direction can be easily reduced to this case by a rotation of the matrix $\A$.

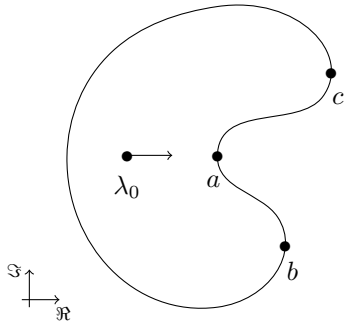
\begin{figure}[b]
  \begin{centering}
\begin{tikzpicture}
\draw (0,-2) to [closed, curve through = {(-2,0) (-1.4,1.4) (0,2) (1.4,.8)  (0,0)
  (.8,-.8) }] (0,-2);
\node[draw=none] at (0,0) {\textbullet};
\node[anchor=center, label=below:{$a$}] at (-0.05,0) {};
\node[draw=none] at (-1.2,0) {\textbullet};
\node[anchor=center, label=below:{$\lambda_0$}] at (-1.2,0) {};
\node[draw=none] at (1.51,1.1) {\textbullet};
\node[anchor=center, label=below:{$c$}] at (1.6,1.1) {};
\node[draw=none] at (.9,-1.2) {\textbullet};
\node[anchor=center, label=below:{$b$}] at (1,-1.15) {};
\draw[->] (-1.2,0.02) -- (-0.6,0.02);
\draw[->] (-2.5,-2) -- (-2.5,-1.5);
\draw[->] (-2.6,-1.9) -- (-2.1,-1.9);
\node[anchor=center, label=below:{\scriptsize$\Re$}] at (-2.05,-1.75) {};
\node[anchor=center, label=above:{\scriptsize$\Im$}] at (-2.7,-1.85) {};
\end{tikzpicture}
\captionof{figure}{The boundary of the QNR might have concave parts}
\label{fig:concave}
\end{centering}
\end{figure}

If we would leave it at aiming for $\Re\lambda_{n+1}\geq \Re\lambda_n$ for every $n\in\N$ however, we could face the problem of missing out on points on concave parts of the boundary, cf. \Cref{fig:concave}, where starting from $\lambda_0$ an algorithm that only focuses on maximization of the real part would eventually either move towards $b$ or towards $c$ but has no reason to stop at $a$.
We overcome this problem by seeking a sequence $(\lambda_n)_{n\in\N}$ that satisfies
\begin{equation*}
  \Re\lambda_{n+1} - p\big(\Im(\lambda_{n+1}-\lambda_0)\big)^2 \geq \Re\lambda_n - p\big(\Im(\lambda_n-\lambda_0)\big)^2
\end{equation*}
with a given penalty constant $p>0$ for all $n\in\N$. More precisely, we will consider the objective function $f_{\alpha,\lambda_0}\colon\So\times\St\to\R$, given by
\begin{equation}\label{eq:objective_function}
  f_{\alpha,\lambda_0}(x,y) = \Re\lambda^{(\alpha)}_{x,y}-p\big(\Im(\lambda^{(\alpha)}_{x,y}-\lambda_0)\big)^2
\end{equation}
for some $\alpha\in\left[0,2\pi\right[$, and aim to construct a sequence $(x_n,y_n)_{n\in\N}\subset\So\times\St$ such that $f_{\alpha,\lambda_0}(x_n,y_n)$ increases with $n$.
Here and from now on, $\lambda^{(\alpha)}_{x,y}$ specifically denotes the one of the two eigenvalues $\lambda^{(\alpha)}_{x,y}$ and $\tilde{\lambda}^{(\alpha)}_{x,y}$ of $M_{x,y}$ such that
\begin{align*}
  \begin{split}
    \Re(\e^{\iu\alpha}\lambda^{(\alpha)}_{x,y}) &> \Re(\e^{\iu\alpha}\tilde{\lambda}^{(\alpha)}_{x,y}), \qquad \text{if} \quad \Re(\e^{\iu\alpha}\lambda^{(\alpha)}_{x,y}) \neq \Re(\e^{\iu\alpha}\tilde{\lambda}^{(\alpha)}_{x,y}),\\
    \text{or} \qquad \Im(\e^{\iu\alpha}\lambda^{(\alpha)}_{x,y}) &\geq \Im(\e^{\iu\alpha}\tilde{\lambda}^{(\alpha)}_{x,y}), \qquad \text{if} \quad  \Re(\e^{\iu\alpha}\lambda^{(\alpha)}_{x,y})=\Re(\e^{\iu\alpha}\tilde{\lambda}^{(\alpha)}_{x,y}).
  \end{split}
\end{align*}
Note, that if $W^2(\A)$ consists of two disconnected components that can be separated by a straight line, $\alpha$ can be chosen such that each component is either the set of all $\lambda^{(\alpha)}_{x,y}$ or the set of all $\lambda^{(\alpha+\pi)}_{x,y}$ with $(x,y)\in\So\times\St$.

\Cref{fig:levelsets} illustrates the effect of the penalty constant $p$ on the level sets of $f_{\alpha,\lambda_0}$ for different choices of $p$, showing that a larger $p$ leads to a significant narrowing of the search direction. The picture also indicates, that in practice $p$ has to be chosen in dependence on the size and shape of the QNR.

\begin{figure}[h]
  \begin{centering}
\begin{tikzpicture}
\draw[->] (2.5,-2) -- (2.5,-1.5);
\draw[->] (2.4,-1.9) -- (2.9,-1.9);
\node[anchor=center, label=below:{\scriptsize$\Re$}] at (2.95,-1.75) {};
\node[anchor=center, label=above:{\scriptsize$\Im$}] at (2.3,-1.85) {};
\draw (5,-2) to [closed, curve through = {(3,0) (3.6,1.4) (5,2) (6.4,.8)  (5,0)
  (5.8,-.8) }] (5,-2);
\node[draw=none] at (3.8,0) {\textbullet};
\node[anchor=center, label=below:{$\lambda_0$}] at (3.8,0) {};
\draw[scale=1, domain=0:2.01, smooth, variable=\y]  plot ({.4*\y*\y+3.8}, {\y});
\draw[scale=1, domain=-1.93:0, smooth, variable=\y]  plot ({.4*\y*\y+3.8}, {\y});
\draw[scale=1, domain=0:1.93, smooth, variable=\y]  plot ({.4*\y*\y+4.3}, {\y});
\draw[scale=1, domain=-1.77:0, smooth, variable=\y]  plot ({.4*\y*\y+4.3}, {\y});
\draw[scale=1, domain=0:1.8, smooth, variable=\y]  plot ({.4*\y*\y+4.8}, {\y});
\draw[scale=1, domain=-1.55:0, smooth, variable=\y]  plot ({.4*\y*\y+4.8}, {\y});
\draw[scale=1, domain=0.47:1.6, smooth, variable=\y]  plot ({.4*\y*\y+5.3}, {\y});
\draw[scale=1, domain=-0.48:-1.23, smooth, variable=\y]  plot ({.4*\y*\y+5.3}, {\y});
\draw[scale=1, domain=0.57:1.31, smooth, variable=\y]  plot ({.4*\y*\y+5.8}, {\y});
\draw (10,-2) to [closed, curve through = {(8,0) (8.6,1.4) (10,2) (11.4,.8)  (10,0)
  (10.8,-.8) }] (10,-2);
\node[draw=none] at (8.8,0) {\textbullet};
\node[anchor=center, label=below:{$\lambda_0$}] at (8.8,0) {};
\draw[scale=1, domain=0:.333, smooth, variable=\y]  plot ({12*\y*\y+8.8}, {\y});
\draw[scale=1, domain=-.337:0, smooth, variable=\y]  plot ({12*\y*\y+8.8}, {\y});
\draw[scale=1, domain=0:.252, smooth, variable=\y]  plot ({12*\y*\y+9.3}, {\y});
\draw[scale=1, domain=-.257:0, smooth, variable=\y]  plot ({12*\y*\y+9.3}, {\y});
\draw[scale=1, domain=0:.132, smooth, variable=\y]  plot ({12*\y*\y+9.8}, {\y});
\draw[scale=1, domain=-.137:0, smooth, variable=\y]  plot ({12*\y*\y+9.8}, {\y});
\end{tikzpicture}
\captionof{figure}{Level sets of $f_{\alpha,\lambda_0}$ for $p$ small (left) and $p$ large (right)}
\label{fig:levelsets}
\end{centering}
\end{figure}
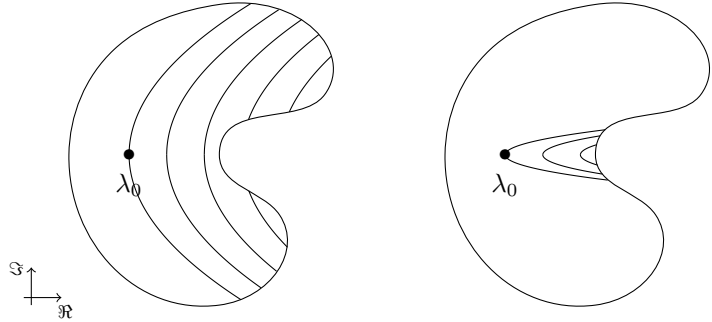

This dependence will be specified in the next result, which can be interpreted as follows: If a boundary point $\lambda_\partial$ with the same imaginary part as $\lambda_0$ satisfies the additional condition, that there exists an $r>0$ such that for every $0<\eps<r$ there exists a $\delta>0$ such that
\begin{equation}\label{eq:boundary}
	\set{z\in\C}{\frac{\eps}{2}<\Re(z-\lambda_\partial)<\eps, |\Im(z-\lambda_\partial)|<\delta}\cap W^2(\A) = \varnothing
\end{equation}
holds, our strategy of creating a sequence in $\So\times\St$ for which the value of the objective function increases can in fact result in the obtainment of $\lambda_\partial$ up to a small error if $p$ is chosen large enough.
\Cref{fig:condition} illustrates condition \eqref{eq:boundary}.

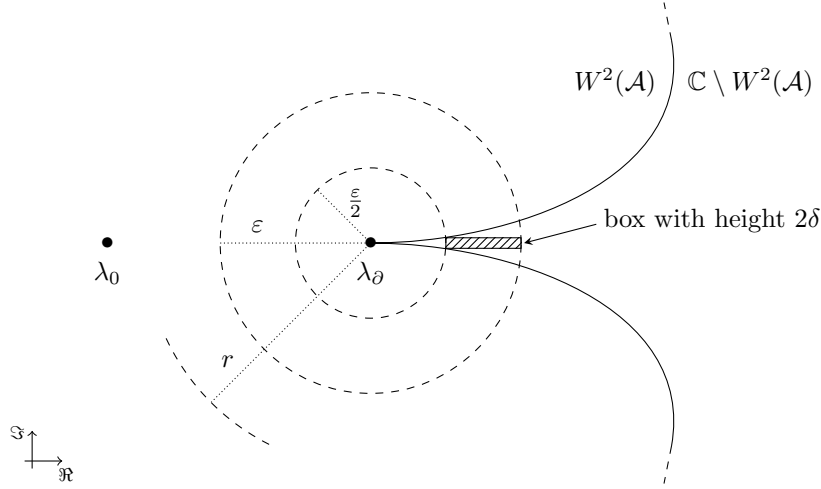
\begin{figure}
  \begin{centering}
    \begin{tikzpicture}
\draw[->] (-4.5,-2) -- (-4.5,-1.5);
\draw[->] (-4.6,-1.9) -- (-4.1,-1.9);
\node[anchor=center, label=below:{\scriptsize$\Re$}] at (-4.05,-1.75) {};
\node[anchor=center, label=above:{\scriptsize$\Im$}] at (-4.7,-1.85) {};
\draw (4,-1.7) to [out=80,in=0]  (0,1)
to [out=0, in=-80] (4,3.7);
\draw [dashed] (4,-1.7) -- (3.9,-2.2);
\draw [dashed] (4,3.7) -- (3.9,4.2);
\draw [dashed] (0,1) circle (2);
\draw [dashed] (0,1) circle (1);
\node[anchor=center, label=above:{$W^2(\A)$}] at (3.25,2.7) {};
\node[anchor=center, label=above:{$\C\setminus W^2(\A)$}] at (5.05,2.7) {};
\node[anchor=center, label=below:{$\lambda_\partial$}] at (0,1) {};
\node[draw=none] at (0,1) {\textbullet};
\draw [densely dotted] (-2,1) -- (0,1);
\node[anchor=center, label=above:{$\frac{\eps}{2}$}] at (-.2,1.1) {};
\draw [densely dotted] (-.7,1.7) -- (0,1);
\node[anchor=center, label=above:{$\eps$}] at (-1.5,.9) {};
\draw [pattern=north east lines] (1,.93) rectangle (2,1.07);
\draw [-stealth] (3,1.25) -- (2.04,1);
\node[anchor=center, label=right:{box with height $2\delta$}] at (2.85,1.3) {};
\node[anchor=center, label=below:{$\lambda_0$}] at (-3.5,1) {};
\node[draw=none] at (-3.5,1) {\textbullet};
\draw [dashed,domain=205:245] plot ({3*cos(\x)}, {3*sin(\x)+1});
\draw [densely dotted] (-2.12,-1.12) -- (0,1);
\node[anchor=center, label=above:{$r$}] at (-1.9,-.9) {};
\end{tikzpicture}
\captionof{figure}{$\lambda_\partial$ satisfies condition \eqref{eq:boundary}}
\label{fig:condition}
\end{centering}
\end{figure}
  
\begin{prop}\label{thm:local_max}
  Let $\lambda_0\in W^{2}(\A)$ and $\lambda_\partial\in\partial W^{2}(\A)$ with $\Im\lambda_\partial = \Im\lambda_0$ and suppose that $\lambda_\partial$ satisfies condition \eqref{eq:boundary} for some $r>0$.
  Then for every $\eps>0$ there exist $p>0$ and $(x_{\m},y_{\m})\in \So\times\St$ such that for all but up to one $\alpha\in\left[0,\pi\right[$ at least one of the objective functions $f_{\alpha,\lambda_0}$ or $f_{\alpha+\pi,\lambda_0}$ given by \eqref{eq:objective_function} has a local maximum in $(x_{\m},y_{\m})$ and $\lambda^{(\alpha)}_{x_{\m},y_{\m}}\in B_\eps(\lambda_\partial)\cap\partial W^2(\A)$ or $\lambda^{(\alpha+{\pi})}_{x_{\m},y_{\m}}\in B_\eps(\lambda_\partial)\cap\partial W^2(\A)$ respectively.
\end{prop}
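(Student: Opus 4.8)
The plan is to reduce \Cref{thm:local_max} to a geometric maximization problem on the set $W^2(\A)$ and then transfer the resulting maximizer back to the objective functions. Write $g_p(z)\coloneqq\Re z-p\bigl(\Im(z-\lambda_0)\bigr)^2$, so that $f_{\alpha,\lambda_0}(x,y)=g_p(\lambda^{(\alpha)}_{x,y})$ by \eqref{eq:objective_function}, and observe that $g_p(\lambda_\partial)=\Re\lambda_\partial$ because $\Im\lambda_\partial=\Im\lambda_0$. Shrinking $\eps$ if necessary so that $\eps<r$, condition \eqref{eq:boundary} provides a $\delta>0$ such that the band $\set{z\in\C}{\tfrac\eps2<\Re(z-\lambda_\partial)<\eps,\ \lvert\Im(z-\lambda_\partial)\rvert<\delta}$ is disjoint from $W^2(\A)$. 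Since $W^2(\A)$ is closed, $K\coloneqq W^2(\A)\cap\overline{B_{3\eps/4}(\lambda_\partial)}$ is a nonempty compact set (it contains $\lambda_\partial$), so $g_p$ attains a maximum on $K$ at some $z^\ast=z^\ast_p$.

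The first key step is to pin down $z^\ast$ for $p$ large. From $g_p(z^\ast)\ge g_p(\lambda_\partial)=\Re\lambda_\partial$ together with the crude bound $\Re z^\ast\le\Re\lambda_\partial+\tfrac{3\eps}{4}$ forced by $z^\ast\in\overline{B_{3\eps/4}(\lambda_\partial)}$, the penalty term makes $\lvert\Im(z^\ast-\lambda_\partial)\rvert<\delta$ as soon as $p>\tfrac{3\eps}{4\delta^2}$; combining this with the empty band and with $\Re(z^\ast-\lambda_\partial)\le\tfrac{3\eps}{4}<\eps$ yields $\Re(z^\ast-\lambda_\partial)\le\tfrac\eps2$. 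Re-inserting this into $g_p(z^\ast)\ge\Re\lambda_\partial$ gives $p\bigl(\Im(z^\ast-\lambda_\partial)\bigr)^2\le\Re z^\ast-\Re\lambda_\partial\le\tfrac\eps2$ (so in particular $\Re z^\ast\ge\Re\lambda_\partial$), whence $\lvert z^\ast-\lambda_\partial\rvert^2\le\tfrac{\eps^2}{4}+\tfrac{\eps}{2p}$. For $p$ large enough this is simultaneously $<\eps^2$ and $<\tfrac{9\eps^2}{16}$, so $z^\ast\in B_\eps(\lambda_\partial)$ and $z^\ast$ lies in the interior of $B_{3\eps/4}(\lambda_\partial)$; the latter means that $z^\ast$ is in fact a local maximum of $g_p$ over all of $W^2(\A)$. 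Finally $z^\ast\in\partial W^2(\A)$: if $z^\ast$ were an interior point of $W^2(\A)$, moving it slightly in the direction of increasing real part would stay in $W^2(\A)$ and strictly increase $g_p$, contradicting maximality on $K$.

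The second step is the pull-back. Since $z^\ast\in W^2(\A)$, choose $(x_{\m},y_{\m})\in\So\times\St$ with $z^\ast\in\sigma(M_{x_{\m},y_{\m}})$ and let $\tilde z$ be the other eigenvalue. For every $\alpha\in[0,\pi[$ with $\Re(\e^{\iu\alpha}z^\ast)\neq\Re(\e^{\iu\alpha}\tilde z)$ — a condition ruling out at most one $\alpha$ (exactly one if $z^\ast\neq\tilde z$, none if $z^\ast=\tilde z$) — one of $\alpha$ and $\alpha+\pi$, say $\beta$, satisfies $\lambda^{(\beta)}_{x_{\m},y_{\m}}=z^\ast$, with $\Re(\e^{\iu\beta}z^\ast)>\Re(\e^{\iu\beta}\tilde z)$ when $z^\ast\neq\tilde z$. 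Using the continuous dependence of the eigenvalues of $M_{x,y}$ on $(x,y)$ (cf.~\Cref{thm:complex_root}) together with this strict inequality — or the triviality of the selection when $z^\ast=\tilde z$ — the point $\lambda^{(\beta)}_{x,y}$ stays in the interior of $B_{3\eps/4}(\lambda_\partial)$ for $(x,y)$ in a neighborhood of $(x_{\m},y_{\m})$; there $\lambda^{(\beta)}_{x,y}\in K$, hence $f_{\beta,\lambda_0}(x,y)=g_p(\lambda^{(\beta)}_{x,y})\le g_p(z^\ast)=f_{\beta,\lambda_0}(x_{\m},y_{\m})$. This is the asserted local maximum, and $\lambda^{(\beta)}_{x_{\m},y_{\m}}=z^\ast\in B_\eps(\lambda_\partial)\cap\partial W^2(\A)$.

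The main obstacle is the calibration of $p$ in the first step: $p$ must be chosen large enough — depending only on $\eps$, $\delta$, $\lambda_0$ and $\lambda_\partial$, hence before any $\alpha$ enters the picture — so that the constrained maximizer $z^\ast$ is at the same time close to $\lambda_\partial$, strictly interior to the auxiliary ball (which is what upgrades a maximum over $K$ to a genuine local maximum of $g_p$ over $W^2(\A)$ that survives the pull-back to $f_{\alpha,\lambda_0}$), and located on $\partial W^2(\A)$. Condition \eqref{eq:boundary} is precisely the barrier preventing $z^\ast$ from escaping past $\lambda_\partial$ along the real direction; it has to be used together with the ball constraint, which alone is too weak. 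A secondary technical point, responsible for the phrase ``all but up to one $\alpha$'', is that the selection $(x,y)\mapsto\lambda^{(\alpha)}_{x,y}$ may fail to be continuous at $(x_{\m},y_{\m})$ precisely at the single value of $\alpha$ for which the tie-breaking rule in the definition of $\lambda^{(\alpha)}_{x,y}$ is active.
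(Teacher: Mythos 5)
Your argument is correct and mirrors the paper's own proof in all essentials: both introduce the penalty function $F(z)=\Re z-p(\Im z)^2$ (your $g_p$), use condition \eqref{eq:boundary} as a barrier to show that for $p$ large the constrained maximizer over $W^2(\A)$ lands in $B_\eps(\lambda_\partial)\cap\partial W^2(\A)$, and then pull the local maximum back to $f_{\alpha,\lambda_0}$ or $f_{\alpha+\pi,\lambda_0}$ by the same case split on whether the two eigenvalues of $M_{x_\m,y_\m}$ coincide. The only cosmetic difference is bookkeeping: the paper works with the auxiliary region $K=\set{z}{(\Re z>\eps/2\wedge|\Im z|<\delta)\vee\Re z>\eps}$ and the condition $p>\eps/\delta^2$, whereas you constrain to the compact ball $\overline{B_{3\eps/4}(\lambda_\partial)}$ and then run an explicit estimate chain to show the maximizer is interior — both instantiate the same idea.
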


\begin{proof}
  Let $\eps>0$. Without loss of generality, we assume that $\lambda_\partial = 0$ and therefore also $\Im\lambda_0 = 0$ by applying a shift to $\A$.
  Moreover, we will assume that $\eps<r$, where $r>0$ is the constant for which \eqref{eq:boundary} holds. Hence, there exists a $\delta>0$ such that
  \begin{equation}\label{eq:boundary0}
    \set{z\in\C}{\frac{\eps}{2}<\Re z<\eps, |\Im z|<\delta}\cap W^2(\A) = \varnothing
  \end{equation}
  and for which we assume that $\delta<\frac{\sqrt{3}}{2}\eps$.

  Let us define the set
\begin{equation*}
  K\coloneqq \set{z\in\C}{\left(\Re z>\frac{\eps}{2} \land |\Im z|<\delta\right)\lor\Re z>\eps}
\end{equation*}
  and consider the function
\begin{equation*}
    F\colon\C\to\R,\quad F(z) \coloneqq \Re z-p(\Im z)^2.
  \end{equation*}
  By choosing $p>\frac{\eps}{\delta^2}>\frac{4}{3\eps}$, we have $(\Im z)^2<\delta^2< \frac{3}{4}\eps^2$ if $F(z)\geq0$ and $\Re z\leq\eps$ and obtain
\begin{equation*}\label{eq:f=0}
  \bigset{z\in\C\setminus K}{F(z) \geq 0}\subset B_\eps(0).
\end{equation*}
Hence, the restriction of the continuous function $F$ to $\C\setminus K$ has a local maximum in $B_\eps(0)$.

This can be used in the context of the quadratic numerical range because due to \eqref{eq:boundary0} we have
\begin{equation*}
  W^{2}(\A)\cap B_\eps(0)\subset\C\setminus K
\end{equation*}
and we also know that $\set{z\in\C\setminus K}{F(z)\geq 0}\cap W^{2}(\A)$ is non-empty because of $0=\lambda_\partial\in W^{2}(\A)$ and $F(0)=0$. Therefore, the restriction of $F$ to the closed set $W^{2}(\A)$ has a local maximum at some $\lambda\in W^{2}(\A)\cap B_\eps(0)$ and there exist $(x_{\m},y_{\m})\in\So\times\St$ such that $\lambda$ is an eigenvalue of $M_{x_{\m},y_{\m}}$.

Furthermore, $\lambda\in\partial W^2(\A)$ holds because if we assume otherwise, there exists a $\gamma_{\mathrm{max}}>0$ such that $\lambda + \gamma\in W^2(\A)$ and $F(\lambda+\gamma) = F(\lambda)+\gamma>F(\lambda)$ for every $\gamma\in\left]0,\gamma_{\mathrm{max}}\right[$ which is a contradiction to $\lambda$ being a local maximum.

From \cite[Theorem 2.5.1]{Kato95} we know that the eigenvalues of a matrix depend continuously on its entries, so if $\lambda$ is the only eigenvalue of $M_{x_{\m},y_{\m}}$, there exists a neighborhood $U$ of $(x_{\m},y_{\m})$ such that $\lambda^{(\alpha)}_{x,y}\in B_{\eps}(0)$ and $\lambda^{(\alpha+\pi)}_{x,y}\in B_{\eps}(0)$ for all $(x,y)\in U$ and all $\alpha\in[0,\pi[$. Hence, both $f_{\alpha,\lambda_0}$ and $f_{\alpha+\pi,\lambda_0}$ have a local maximum in $(x_{\m},y_{\m})$ and $\lambda^{(\alpha)}_{x_{\m},y_{\m}} = \lambda^{(\alpha+\pi)}_{x_{\m},y_{\m}} = \lambda\in B_{\eps}(0)\cap\partial W^2(\A)$ for all $\alpha\in\left[0,\pi\right[$.

In the other case, if $\lambda$ is one of two distinct eigenvalues of $M_{x_{\m},y_{\m}}$, we choose $\alpha\in[0,\pi[$ such that
\begin{equation*}
\Re(\e^{\iu\alpha}\lambda^{(\alpha)}_{x_{\m},y_{\m}})\neq\Re(\e^{\iu\alpha}\lambda^{(\alpha+\pi)}_{x_{\m},y_{\m}})
\end{equation*}
and again by continuity we will find a neighborhood $U$ of $(x_{\m},y_{\m})$ such that $\Re\lambda_{1}\neq\Re\lambda_{2}$ for all $\lambda_{1}\in\bset{\e^{\iu\alpha}\lambda^{(\alpha)}_{x,y}}{(x,y)\in U}$ and $\lambda_{2}\in\bset{\e^{\iu\alpha}\lambda^{(\alpha+\pi)}_{x,y}}{(x,y)\in U}$ and either $\lambda^{(\alpha)}_{x,y}\in B_{\eps}(0)$ for all $(x,y)\in U$ if $\lambda = \lambda^{(\alpha)}_{x_{\m},y_{\m}}$ or $\lambda^{(\alpha+\pi)}_{x,y}\in B_{\eps}(0)$ for all $(x,y)\in U$ if $\lambda = \lambda^{(\alpha+\pi)}_{x_{\m},y_{\m}}$. Hence, either $f_{\alpha,\lambda_0}$ or $f_{\alpha+\pi,\lambda_0}$ has a local maximum in $(x_{\m},y_{\m})$ and $\lambda^{(\alpha)}_{x_{\m},y_{\m}} = \lambda \in B_{\eps}(0)\cap\partial W^2(\A)$ or $\lambda^{(\alpha+\pi)}_{x_{\m},y_{\m}} = \lambda \in B_{\eps}(0)\cap\partial W^2(\A)$ respectively.
\end{proof}

Let us now fix an $\alpha\in \left[0,2\pi\right[$. As explained above, we are looking for a sequence $(x_n,y_n)_{n\in\N}\subset\So\times\St$ such that $f_{\alpha,\lambda_0}(x_n,y_n)$ increases with $n$. Let us say we arrived at $(x_n,y_n)$ so far, so our goal is to find $(x_{n+1},y_{n+1})$ such that $f_{\alpha,\lambda_0}(x_{n+1},y_{n+1})\geq f_{\alpha,\lambda_0}(x_n,y_n)$.

As a first step, we will therefore identify the steepest ascent gradient of $f_{\alpha,\lambda_0}$ in $(x_n,y_n)$. Considering a differentiable curve $(\phi,\psi)\colon[0,t_1]\to\So\times\St$, $t\mapsto(\phi(t),\psi(t))$, with $(\phi(0),\psi(0)) =(x_n,y_n)$ and assuming that $\sigma(M_{x_n,y_n})$ consists of two simple eigenvalues $\lambda_n$ and $\tilde{\lambda}_n$ with $\Re(\e^{\iu\alpha}\lambda_n)>\Re(\e^{\iu\alpha}\tilde{\lambda}_n)$, we know by \Cref{thm:lambda_dot} that there exists a $0<t_0\leq t_1$ and a differentiable curve $\lambda_{\phi,\psi}\colon[0,t_0]\to W^2(\A)$ such that
\begin{equation*}
f_{\alpha,\lambda_0}\big(\phi(t),\psi(t)\big) = \Re\big(\lambda_{\phi,\psi}(t)\big)-p\big(\Im\big(\lambda_{\phi,\psi}(t)-\lambda_0\big)\big)^2
\end{equation*}
for all $t\in[0,t_0]$.
If we take a look at $\dt f_{\alpha,\lambda_0}\big(\phi(\cdot),\psi(\cdot)\big)$ at the point $t=0$, we see that again by \Cref{thm:lambda_dot}
\begin{align*}
  \dt f_{\alpha,\lambda_0}\big(\phi(0),\psi(0)\big) &= \Re\left(\dt\lambda_{\varphi,\psi}(0)\right) - 2p\Im(\lambda_n-\lambda_0)\Im\left(\dt\lambda_{\varphi,\psi}(0)\right)\\
  &= \Re \left(\left\langle S(x_n,y_n,\lambda_n)\begin{bmatrix}x_n\\y_n\end{bmatrix},\begin{bmatrix}\dot{\varphi}(0)\\ \dot{\psi}(0) \end{bmatrix}\right\rangle + \left\langle S(x_n,y_n,\lambda_n)\begin{bmatrix}\dot{\varphi}(0)\\ \dot{\psi}(0)\end{bmatrix},\begin{bmatrix}x_n\\y_n\end{bmatrix}\right\rangle\right)\\
              & \quad \ - 2p\Im(\lambda_n-\lambda_0)\Im\left(\left\langle S(x_n,y_n,\lambda_n)\begin{bmatrix}x_n\\y_n\end{bmatrix},\begin{bmatrix}\dot{\varphi}(0)\\ \dot{\psi}(0) \end{bmatrix}\right\rangle \right.\\
              &    \qquad \qquad \qquad \qquad \qquad \quad  + \left.\left\langle S(x_n,y_n,\lambda_n)\begin{bmatrix}\dot{\varphi}(0)\\ \dot{\psi}(0)\end{bmatrix},\begin{bmatrix}x_n\\y_n\end{bmatrix}\right\rangle\right)\\
                                           &= \Re\left\langle T_+(x_n,y_n,\lambda_n)\begin{bmatrix}x_n\\y_n\end{bmatrix},\begin{bmatrix}\dot{\varphi}(0)\\ \dot{\psi}(0) \end{bmatrix}\right\rangle\\
  & \quad \ - 2p\Im(\lambda_n-\lambda_0)\Im\left\langle T_-(x_n,y_n,\lambda_n)\begin{bmatrix}x_n\\y_n\end{bmatrix},\begin{bmatrix}\dot{\varphi}(0)\\ \dot{\psi}(0) \end{bmatrix}\right\rangle\\
  &= \Re\left\langle T_+(x_n,y_n,\lambda_n) + 2p\Im(\lambda_n-\lambda_0)\iu T_-(x_n,y_n,\lambda_n)\begin{bmatrix}x_n\\y_n\end{bmatrix},\begin{bmatrix}\dot{\varphi}(0)\\ \dot{\psi}(0) \end{bmatrix}\right\rangle\\
  &= \Re\left\langle T(x_n,y_n,\lambda_n,\lambda_0)\begin{bmatrix}x_n\\y_n\end{bmatrix},\begin{bmatrix}\dot{\varphi}(0)\\ \dot{\psi}(0) \end{bmatrix}\right\rangle
\end{align*}
where
\begin{align*}
  T_+(x_n,y_n,\lambda_n) &\coloneqq S(x_n,y_n,\lambda_n) + S^*(x_n,y_n,\lambda_n),\\
  T_-(x_n,y_n,\lambda_n) &\coloneqq S(x_n,y_n,\lambda_n) - S^*(x_n,y_n,\lambda_n)
\end{align*}
and
\begin{align*}\label{eq:T}
  T(x_n,y_n,\lambda_n,\lambda_0) &\coloneqq T_+(x_n,y_n,\lambda_n) + 2p\Im(\lambda_n-\lambda_0)\iu T_-(x_n,y_n,\lambda_n).
\end{align*}
Let us denote the tangential space of $\So$ in $x_n$ with regard to the real part of the inner product of $\H_1$ by $T_{x_n}\So \coloneqq \set{u\in\H_{1}}{\Re\langle x_n,u\rangle =0}$ and analogously $T_{y_n}\St \coloneqq \set{v\in\H_{2}}{\Re\langle y_n,v\rangle =0}$. Then, for $(u,v)\in T_{x_n}\So\times T_{y_n}\St$ with $\norm{u}=\norm{v}=1$, we will consider the curves $\varphi\colon [0,2\pi]\to\So$ and $\psi\colon [0,2\pi]\to\St$ defined by
\begin{equation}\label{eq:phi_psi}
\varphi(t) = \cos(t)x_n+\sin(t)u \qquad \text{and} \qquad \psi(t) = \cos(t)y_n+\sin(t)v,
\end{equation}
which satisfy $\norm{\varphi(t)}^2 = \cos^2(t)+2\Re\langle\cos(t)x_n,\sin(t)u\rangle +\sin^2(t) = 1$, $\varphi(0)=x_n$ and $\dot{\varphi}(0) = u$ as well as $\norm{\psi(t)}^2 = 1$, $\psi(0)=y_n$ and $\dot{\psi}(0) = v$.

Therefore, our problem can be simplified to finding a vector $(u,v)\in T_{x_n}\So\times T_{y_n}\St$ with $\norm{u}=\norm{v}=1$ for which the term
\begin{equation*}
\Re\left\langle T(x_n,y_n,\lambda_n,\lambda_0)\begin{bmatrix}x_n\\y_n\end{bmatrix},\begin{bmatrix}u\\ v \end{bmatrix}\right\rangle
\end{equation*}
is maximized. This vector is given by the normalized orthogonal projection of
\begin{equation*}
\begin{bmatrix}w\\z\end{bmatrix}\coloneqq T(x_n,y_n,\lambda_n,\lambda_0)\begin{bmatrix}x_n\\y_n\end{bmatrix}
\end{equation*}
onto $T_{x_n}\So\times T_{y_n}\St$, i.e.
\begin{equation*}
\begin{bmatrix}\tilde{u}\\\tilde{v}\end{bmatrix} = \begin{bmatrix}w-\Re\langle w,x_n\rangle x_n\\ z-\Re\langle z,y_n\rangle y_n\end{bmatrix}, \qquad \begin{bmatrix}u\\v\end{bmatrix} = \begin{bmatrix}\tilde{u} / \norm{\tilde{u}}\\ \tilde{v} / \norm{\tilde{v}}\end{bmatrix}.
\end{equation*}

With $u$ and $v$ at hand, we will now, in a second step, search for $(s,t)\in[0,2\pi]\times[0,2\pi]$ such that $f_{\alpha,\lambda_0}\big(\phi(s),\psi(t)\big)$ is maximal with $\varphi$ and $\psi$ as in \eqref{eq:phi_psi} and we have effectively reduced our problem to a two-dimensional optimization. This yields a new pair of vectors $(x_{n+1},y_{n+1})\coloneqq (\phi(s),\psi(t))\in\So\times\St$ with $f_{\alpha,\lambda_0}(x_{n+1},y_{n+1})\geq f_{\alpha,\lambda_0}(x_n,y_n)$.

Algorithm~\ref{alg:boundary} summarizes in pseudocode how we proceed towards the boundary in direction of the positive real axis. It takes the matrix $\A$ in form of its decompositon parts $A$, $B$, $C$ and $D$, a starting point $\lambda_0\in W^2(\A)$ with corresponding $(x_0,y_0)\in\So\times\St$, a penalty constant $p$, an angle $\alpha$ and a number of iterations $i_{\mathrm{max}}$ as its input and returns arrays $x\subset\So$ and $y\subset\St$. Note, that the algorithm does not only return the vectors associated to the point closest to the boundary after $i_{\mathrm{max}}$ iterations but an array of vectors that can be used to compute some points along the way. Later on, these points can be plotted as well in order to fill out the interior of the quadratic numerical range.

\begin{algorithm2e}[h]
    \SetAlgorithmStyle
    \caption{Seeking the Boundary}
    \label{alg:boundary}
    \KwIn{$A$, $B$, $C$, $D$, $x_0$, $y_0$, $\lambda_0$, $p$, $\alpha$, and $i_{\mathrm{max}}$}
    \Fun{$f(s,t,A,B,C,D,x,y,u,v,p,\alpha,\lambda_0)$}{
      $x = \cos(s)x + \sin(s)u$\;
      $y = \cos(t)y + \sin(t)v$\;
      \KwRet $\Re\lambda^{(\alpha)}_{x,y}-p\Im\big(\lambda^{(\alpha)}_{x,y}-\lambda_0\big)^2$\;
    }
    \Fun{$\mathrm{find\_boundary}(A,B,C,D,x,y,\lambda_0,p,\alpha)$}{
      \If{$2\lambda^{(\alpha)}_{x,y}\neq a_{x}+d_{y}$}{
        $S = \frac{1}{2\lambda^{(\alpha)}_{x,y}-a_x-d_y}\Big[ \begin{smallmatrix} (\lambda^{(\alpha)}_{x,y}-d_y)A & c_{x,y} B \\ b_{y,x} C & (\lambda^{(\alpha)}_{x,y}-a_x)D \end{smallmatrix} \Big]$\;
        $T_+ = S+S^*$\;
        $T_- = S-S^*$\;
        $T = T_+ + 2p\Im\big(\lambda^{(\alpha)}_{x,y}-\lambda_0\big)\iu T_-$\;
        $\big[\begin{smallmatrix}w\\z\end{smallmatrix}\big] =  T \big[\begin{smallmatrix}x\\y\end{smallmatrix}\big]$\;
        \If{$w\neq0$ and $z\neq0$}{
          $u = \frac{w-\Re\langle w,x\rangle x}{\norm{w-\Re\langle w,x\rangle x}}$\;
          $v = \frac{z - \Re\langle z,y\rangle y}{\norm{z - \Re\langle z,y\rangle y}}$\;
          Determine $(s,t)\in[0,2\pi]\times[0,2\pi]$ such that $f(s,t,A,B,C,D,x,y,u,v,p,\alpha,\lambda_0)$ is maximal\;
          $x = \cos(s)x + \sin(s)u$\;
          $y = \cos(t)y + \sin(t)v$\;
        }
      }
      \KwRet $\big(x,y)$\;
    }
    $(x[0],y[0]) = \mathrm{find\_boundary}(A,B,C,D,x_{0},y_{0},\lambda_{0},p,\alpha)$\;
    \For{$i=1,\dots,i_{\mathrm{max}}-1$}{
      $(x[i],y[i]) = \mathrm{find\_boundary}(A,B,C,D,x[i-1],y[i-1],\lambda_0,p,\alpha)$\;
      \If{$(x[i],y[i]) == (x[i-1],y[i-1])$}{
        \KwRet $\{(x[0],y[0]),\dots,(x[i-1],y[i-1])\}$\;
      }
    }
    \KwRet $(x,y)$\;
\end{algorithm2e}

\subsection{Box Approach}

In order to formulate an algorithm which computes the quadratic numerical range of a given matrix $\A$ with increasing quality we proceed as follows:\\
Initially, we fix an $\alpha\in[0,\pi[$ for the objective function \eqref{eq:objective_function} and compute a few points within $W^2(\A)$ by using the random vector sampling method, i.e.~we randomly generate some $(x,y)\in\So\times\St$ and insert the eigenvalues $\lambda^{(\alpha)}_{x,y}$ of $M_{x,y}$ in an array $W$ and the other eigenvalues $\lambda^{(\alpha+\pi)}_{x,y}$ of $M_{x,y}$ in a second array $\widetilde{W}$. Those points will serve as candidates for the starting points $\lambda_0$ of Algorithm~\ref{alg:boundary}, but in order to control their number and decrease the required iteration steps of Algorithm~\ref{alg:boundary}, we will preselect the starting points via a box approach such that the computational cost will be reduced.

We start by creating a rectangular grid of small equally sized boxes covering all the sampled points in $W$. Then, we pick one sample point from the interior of each box that is non-empty as a representative and determine all non-empty boxes that are not surrounded by other non-empty boxes. Now, only the representatives of those boxes will be used as a starting point $\lambda_0$. This ensures that the $\lambda_0$ will be evenly spread throughout the cloud of computed points even though they might cluster. Moreover, it allows us to restrict our choice of starting points to those that are presumably already close to the boundary, which leads to a higher chance of reaching the boundary within only a few iterations of Algorithm~\ref{alg:boundary}.

For each starting point, we will then select some randomly oriented but equally spread search directions, rotate the matrix $\A$ accordingly and proceed towards the boundary via Algorithm~\ref{alg:boundary}. This yields new vectors $(x,y)\in\So\times\St$ and we subsequently insert the new corresponding points $\lambda^{(\alpha)}_{x,y}$ into $W$. At this point, it has also shown to be advantageous to compute the other eigenvalues as well and insert them into $\widetilde{W}$.

Afterwards, we do the same for ${\alpha+\pi}$ in place of $\alpha$ and $\widetilde{W}$ interchanged with $W$ by using a separate grid of boxes and obtain larger clouds of points $W$ and $\widetilde{W}$ as a result. Now, the whole process can be repeated via the construction of new grids of boxes.

If we repeat this procedure over and over again, the number of starting points will eventually remain relatively constant. At this point, we increase the number of boxes, i.e.~reduce their size, in order to increase the resolution of the box approach and therefore heighten our ability to distinguish potential starting points in the interior from potential starting points close to the boundary.\\

\begin{minipage}{0.23\textwidth}
\begin{tikzpicture}[every node/.style={minimum size=1cm-\pgflinewidth, outer sep=0pt}]
  \colorlet{shadecolor}{gray!50}
  \node[fill=shadecolor] at (0.5,0.5) {};
  \node[fill=shadecolor] at (0.5,1.5) {};
  \node[fill=shadecolor] at (0.5,2.5) {};
  \node[fill=shadecolor] at (1.5,0.5) {};
  \node[fill=gray] at (1.5,1.5) {};
  \node[fill=gray] at (1.5,2.5) {};
  \node[fill=shadecolor] at (2.5,0.5) {};
  \node[fill=shadecolor] at (2.5,1.5) {};
  \node[fill=shadecolor] at (2.5,2.5) {};
  \node[fill=shadecolor] at (3.5,2.5) {};
  \node[fill=shadecolor] at (2.5,3.5) {};
  \node[fill=shadecolor] at (0.5,3.5) {};
  \node[fill=shadecolor] at (1.5,3.5) {};
  \draw [step=1.0,black, very thick] (0,0) grid (4,4);
  \draw [fill] (4,2.6) circle [radius=2pt];
  \draw  (3.4,2.7) circle [radius=5pt];
  \draw [fill] (0.4,0) circle [radius=2pt];y
  \draw  (0.4,0) circle [radius=5pt];
  \draw [fill] (0.8,0.85) circle [radius=2pt];
  \draw [fill] (0.6,1.3) circle [radius=2pt];
  \draw [fill] (0.6,1.7) circle [radius=2pt];
  \draw  (0.6,1.3) circle [radius=5pt];
  \draw [fill] (0,2.2) circle [radius=2pt];
  \draw  (0.5,2.7) circle [radius=5pt];
  \draw [fill] (0.5,2.7) circle [radius=2pt];
  \draw [fill] (1.5,0.7) circle [radius=2pt];
  \draw  (1.5,0.7) circle [radius=5pt];
  \draw [fill] (1.7,0.2) circle [radius=2pt];
  \draw [fill] (1.4,1.2) circle [radius=2pt];
  \draw [fill] (1.8,1.5) circle [radius=2pt];
  \draw  (1.8,1.5) circle [radius=5pt];
  \draw [fill] (1.7,1.8) circle [radius=2pt];
  \draw [fill] (1.6,2.6) circle [radius=2pt];
  \draw  (1.6,2.6) circle [radius=5pt];
  \draw [fill] (2.5,0.4) circle [radius=2pt];
  \draw  (2.5,0.4) circle [radius=5pt];
  \draw [fill] (2.3,0.6) circle [radius=2pt];
  \draw [fill] (2.6,1.6) circle [radius=2pt];
  \draw  (2.6,1.6) circle [radius=5pt];
  \draw [fill] (2.2,2.2) circle [radius=2pt];
  \draw [fill] (2.6,2.2) circle [radius=2pt];
  \draw  (2.6,2.2) circle [radius=5pt];
  \draw [fill] (2.5,2.8) circle [radius=2pt];
  \draw [fill] (3.4,2.7) circle [radius=2pt];
  \draw [fill] (2.1,3.5) circle [radius=2pt];
  \draw  (2.3,4) circle [radius=5pt];
  \draw [fill] (2.3,4) circle [radius=2pt];
  \draw [fill] (0.2,3.2) circle [radius=2pt];
  \draw [fill] (0.7,3.5) circle [radius=2pt];
  \draw  (0.7,3.5) circle [radius=5pt];
  \draw [fill] (1.5,3.2) circle [radius=2pt];
  \draw  (1.5,3.2) circle [radius=5pt];
  \draw [fill] (1.3,2.4) circle [radius=2pt];
\end{tikzpicture}
\captionof{figure}{Grid of boxes}
\label{fig:Boxes}
\end{minipage}
\hspace{.1\textwidth}
\begin{minipage}{0.61\textwidth}
  \Cref{fig:Boxes} illustrates the box approach in a small example. Here, the black dots represent the cloud of points in $W$ that have been computed so far and a rectangular grid of boxes is constructed covering all points. In every non-empty box (gray), one of these points is selected and surrounded by a circle. Now, only the circled points in the light gray boxes will be used as starting points in the next iteration since the dark gray boxes are surrounded by other non-empty boxes.\\
  \vfill
\end{minipage}

\medskip

When it comes to the determination of the penalty constant $p$ one has to balance two aspects: Larger penalty constants lead to higher accuracy in the given search direction, c.f. \Cref{thm:local_max}, while smaller penalty constants result in a faster convergence towards the boundary. We therefore start with a small penalty constant to cover a large area in the beginning and increase $p$ over time while also making it dependent on the diameter of the cloud of points in the current iteration.

Algorithm~\ref{alg:grid} explains in pseudocode, how the starting points are selected and how $p$ is chosen. It takes arrays $x\subset\So$, $y\subset\St$ and $W\subset W^2(\A)$, the square root of the number of boxes $\mathpzc{B}$ and the current iteration $i$ as its input and returns arrays $x_0\subset\So$, $y_0\subset\St$ and $\lambda_0\subset W^2(\A)$ as well as the penalty constant $p$.\\

\begin{algorithm2e}[h]
    \SetAlgorithmStyle
    \caption{Grid}
    \label{alg:grid}
    \KwIn{$x$, $y$, $W$, $\mathpzc{B}$ and $i$}
    $x_0 = \{\}$, $y_0 = \{\}$, $\lambda_0 = \{\}$, $G = \mathrm{zeros}(\mathpzc{B},\mathpzc{B})$ and $I = \mathrm{zeros}(\mathpzc{B},\mathpzc{B})$\;
      $\Re_{\max} = \max\Re W$, $\Im_{\max} = \max\Im W$, $\Re_{\min} = \min\Re W$ and $\Im_{\min} = \min\Im W$\;
      $p = \frac{i^2}{100}\max\{|\Re_{\max} -\Re_{\min}|, |\Im_{\max} -\Im_{\min} |\}$\;
      $h_\Re = (\Re_{\max}-\Re_{\min})/\mathpzc{B}$ and $h_\Im = (\Im_{\max}-\Im_{\min})/\mathpzc{B}$\;
      \For{$j=0$ to the length of $W$ $-1$}{
        $k = \mathrm{integer}((\Im_{\max}-\Im W[j])/h_\Im)$ and $l = \mathrm{integer}((\Re W[j]-\Re_{\min})/h_\Re)$\;
        \textbf{if} {$k == \mathpzc{B}$} \textbf{then} {$k \mathrel{-}= 1$} and \textbf{if} {$l == \mathpzc{B}$} \textbf{then} {$l \mathrel{-}= 1$}\;
        \lIf{$G[k][l] == 0$}{
          $G[k][l] = 1$ and $I[k][l] = j+1$
        }
      }
      \For{$k=1,\dots,\mathpzc{B}-1$}{
        \For{$l=1,\dots,\mathpzc{B}-1$}{
          \lIf{$G[k+m][l+n] = 1$ for all $m,n\in\{-1,0,1\}$}{
            $I[k][l] = 0$
          }
        }
      }
      $j = 0$\;
      \For{$k=1,\dots,\mathpzc{B}-1$}{
        \For{$l=1,\dots,\mathpzc{B}-1$}{
          \If{$I[k][l]\neq 0$}{
            $x_0[j] = x[I[k][l]-1]$, $y_0[j] = y[I[k][l]-1]$, $\lambda_0[j] = W[I[k][l]-1]$ and $j\mathrel{+}=1$\;
          }
        }
      }
      \KwRet $(x_0,y_0,\lambda_0,p)$\;
    \end{algorithm2e}

Algorithm~\ref{alg:boxes} contains the full instructions for the computation of the numerical range.
It takes the matrix $\A$ in form of its decomposition parts $A$, $B$, $C$ and $D$, an angle $\alpha$ and the time the algorithm should run for $\tau_{\mathrm{max}}$ as its input and returns a cloud of points $(W,\widetilde{W})\subset W^2(\A)$. The square roots of the numbers of boxes $\mathpzc{B}$ and $\tilde{\mathpzc{B}}$ will be increased in dependence of counters $\mathpzc{c}$ and $\tilde{\mathpzc{c}}$ that keep track of the number of starting points.

\begin{algorithm2e}[h!]
    \SetAlgorithmStyle
    \caption{Computing the Quadratic Numerical Range}
    \label{alg:boxes}
    \KwIn{$A$, $B$, $C$, $D$, $\alpha$ and $\tau_{\mathrm{max}}$}
    $\tau = \text{current time} + \tau_{\mathrm{max}}$ and $\mathrm{timeflag} = \mathrm{false}$\;
    $\A = \big[\begin{smallmatrix}A & B \\ C & D\end{smallmatrix}\big]$, $W = \{\}$, $\widetilde{W} = \{\}$, $\mathpzc{B} = 20$, $\tilde{\mathpzc{B}} = 20$, $\mathpzc{c} = 0$ and $\tilde{\mathpzc{c}} = 0$\;
    Generate arrays of random vectors $x\subset \So$ and $y\subset\St$\;
    $n = \text{length of } x$\;
    \For{$i=0$ to $n-1$}{
      $W[i] = \lambda^{(\alpha)}_{x[i],y[i]}$ and $\widetilde{W}[i] = \lambda^{(\alpha+\pi)}_{x[i],y[i]}$\;
    }
    \For{$i=0,\dots,\infty$}{
      \For{$j = 0, \pi$}{
        \lIf{$j == 0$}{$(x_0,y_0,\lambda_0,p) = \mathrm{Grid}(x,y,W,\mathpzc{B},i)$}
        \lIf{$j == \pi$}{$(x_0,y_0,\lambda_0,p) = \mathrm{Grid}(x,y,\widetilde{W},\tilde{\mathpzc{B}},i)$}
        \For{$k=0$ to the length of $\lambda_0$ $-1$}{
          $\theta_0 = $ random angle in $[0,2\pi[$\;
          \For{$l=0,\dots,4$}{
            $\theta = \theta_0 + l\frac{2\pi}{5}$ and $\big[\begin{smallmatrix}A & B \\ C & D\end{smallmatrix}\big] = \e^{\iu\theta}\A$\;
            $(\hat{x},\hat{y}) = \mathrm{Seeking\_the\_Boundary}(A,B,C,D,x_0[k],y_0[k],\e^{\iu\theta}\lambda_0[k],p,\alpha+j-\theta,2)$\;
            $\hat{n} = \text{length of } \hat{x}$\;
            \For{$m=0$ to $\hat{n}-1$}{
              $W[n+m] = \lambda^{(\alpha)}_{\hat{x}[m],\hat{y}[m]}$ and $\widetilde{W}[n+m] = \lambda^{(\alpha+\pi)}_{\hat{x}[m],\hat{y}[m]}$\;
              $x[n+m] = \hat{x}[m]$ and $y[n+m] = \hat{y}[m]$\;
            }
            \lIf{current time $>\tau$}{$\mathrm{timeflag} = \mathrm{true}$ and \textbf{break}}
            $n \mathrel{+}= \hat{n}$\;
          }
          \lIf{$\mathrm{timeflag} == \mathrm{true}$}{\textbf{break}}
        }
        \lIf{$\mathrm{timeflag} == \mathrm{true}$}{\textbf{break}}
        \If{$j == 0$}{
          \lIf{$1 \geq (\text{length of }\lambda_0)/\mathpzc{c} > 0.99$}{$\mathpzc{B} = \mathrm{integer}(\sqrt{2}\mathpzc{B})$}
          $\mathpzc{c} = \text{ length of }\lambda_0$\;
        }
        \If{$j == \pi$}{
          \lIf{$1 \geq (\text{length of }\lambda_0)/\tilde{\mathpzc{c}} > 0.99$}{$\tilde{\mathpzc{B}} = \mathrm{integer}(\sqrt{2}\tilde{\mathpzc{B}})$}
          $\tilde{\mathpzc{c}} = \text{ length of }\lambda_0$\;
        }
      }
      \lIf{$\mathrm{timeflag} == \mathrm{true}$}{\textbf{break}}
    }
    \KwRet $(W,\widetilde{W})$\;
\end{algorithm2e}

\section{Examples}\label{sect:examples}

The following pictures are the result of a Python implementation of Algorithm~\ref{alg:boxes}. Here, the search for $(s,t)\in[0,2\pi]\times[0,2\pi]$ such that $f_{\alpha,\lambda_0}\big(\phi(s),\psi(t)\big)$ is maximal with $\varphi$ and $\psi$ as in \eqref{eq:phi_psi} is further reduced to a one-dimensional optimization, i.e.~$s=t$, in order to speed this step up. This allows us to compute much more points in the QNR within the same amount of time and we obtain better pictures in the end. 

\begin{example}
  Let us consider the matrix
\begin{equation*}
  \A_{1} = \left[\begin{array}{ccccc|ccccc}
                 2 & \iu & 0 & \dots & 0 & 1 & 3+\iu & 0 & \dots & 0\\
                 \iu & \ddots & \ddots & \ddots & \vdots & 3+\iu & \ddots & \ddots & \ddots & \vdots\\
                 0 & \ddots & \ddots & \ddots & 0  & 0 & \ddots  & \ddots & \ddots & 0\\
                 \vdots & \ddots & \ddots  & \ddots & \iu  & \vdots & \ddots   &\ddots  & \ddots & 3+\iu\\
                 0 & \dots & 0 & \iu & 2 & 0 & \dots & 0 & 3+\iu & 1\\
                 \hline
                 1 & 3+\iu & 0 & \dots & 0 & -2 & \iu & 0 & \dots & 0\\
                 3+\iu & \ddots & \ddots & \ddots & \vdots & \iu & \ddots & \ddots & \ddots & \vdots\\
                 0 & \ddots & \ddots & \ddots & 0  & 0 & \ddots  & \ddots & \ddots & 0\\
                 \vdots & \ddots & \ddots  & \ddots & 3+\iu  & \vdots & \ddots   &\ddots  & \ddots & \iu\\
                 0 & \dots & 0 & 3+\iu & 1 & 0 & \dots & 0 & \iu & -2  \end{array}\right],
\end{equation*}
where the blocks $A$, $B$, $C$ and $D$ are equally sized tridiagonal matrices, cf. \cite[Example 1.1.5]{Tretter}. In \Cref{fig:A2}, the results of the algorithm are compared to the random vector sampling method when executed for the determination of the QNR of $\A_{1}$ with $\dim(\A_1) = 40$. In the top row, the execution time was one minute and in the bottom row one hour while the plots in the left column are a result of the algorithm and the plots in the right column are a result of the random vector sampling method. Here, $\alpha$ was chosen to be zero such that the sets $W$ (dark blue) and $\widetilde{W}$ (light blue) coincide with the two disconnected components of $W^{2}(\A_{1})$. As we see, the algorithm is capable of detecting the rough shape of the quadratic numerical range already after a short period of time and refines the result very well afterwards while the random vector sampling method is only able to locate a small area of the QNR which gets slightly enlarged over time.

\Cref{fig:A2_large} demonstrates the efficacy of the algorithm (left) even when the dimension of $\A_1$ is increased to $4000$. Here, the superiority over the random vector sampling method (right) becomes even more visible. The computation time was two hours in both pictures.
\begin{figure}[h]
\includegraphics[width=\textwidth]{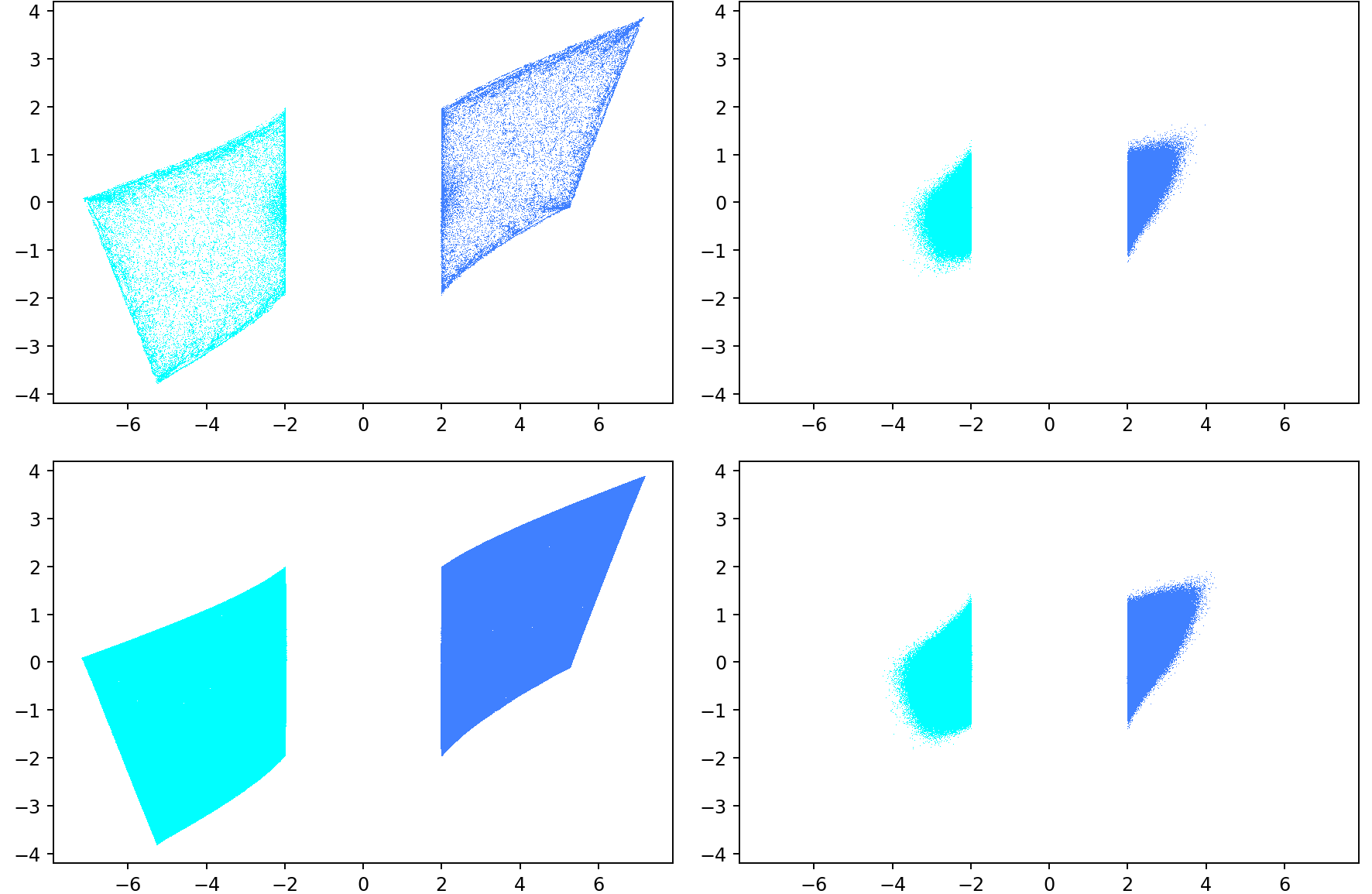}
\captionof{figure}{QNR of $\A_{1}$: Algorithm versus random vector sampling for different amounts of time}
\label{fig:A2}
\end{figure}

\bigskip

\begin{figure}[h!]
\includegraphics[width=\textwidth]{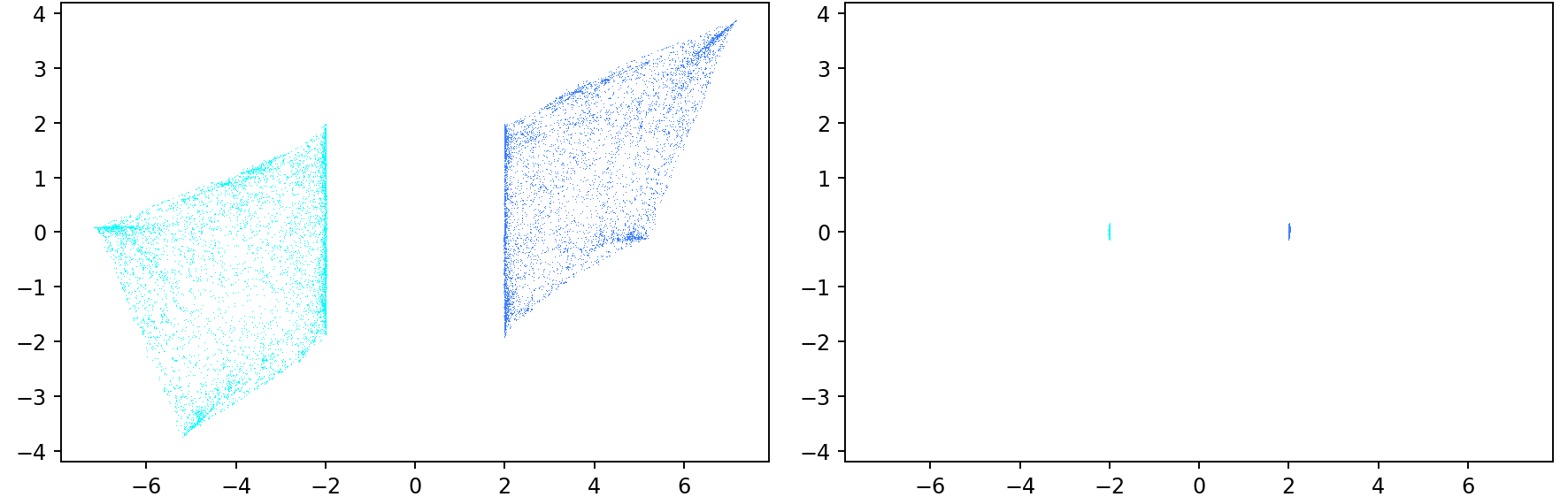}
\captionof{figure}{QNR of high dimensional $\A_{1}$: Algorithm versus random vector sampling}
\label{fig:A2_large}
\end{figure}
\end{example}

\begin{example}
  Let us consider a smaller matrix like
  \begin{equation*}
\A_{2} = \left[ \begin{array}{cccc|cccc} 0 & 0 & 0 & 0 & 1 & 0 & 0 & 0 \\ 0 & 0 & 0 & 0 & 0 & 1 & 0 & 0 \\ 0 & 0 & 0 & 0 & 0 & 0 & 1 & 0 \\ 0 & 0 & 0 & 0 & 0 & 0 & 0 & 1 \\ \hline -2 & -1 & 0 & 0 & \iu & 5\iu & 0 & 0 \\ -1 & -2 & -1 & 0 & -5\iu & \iu & 5\iu & 0 \\ 0 & -1 & -2 & -1 & 0 & -5\iu & \iu & 5\iu \\ 0 & 0 & -1 & -2 & 0 & 0 & -5\iu & \iu \end{array}\right],
\end{equation*}
cf. \cite[Example 1.3.3]{Tretter}. \Cref{fig:Hat} shows the quadratic numerical range of $A_{2}$ after executing the algorithm and the random vector sampling method with $\alpha = \pi/2$ for one hour each. Although the random vector method generally covers a much bigger part of the quadratic numerical range of smaller matrices like this when compared to higher dimensional ones, here, it still fails to adequately depict some parts of the boundary and struggles to close the gap between the two components, which seem to be connected as the plot of the algorithm suggests. As we see, this is not compensated by the fact that only $3.357.980$ points were computed with the algorithm while $124.581.432$ points were sampled via the random vector method within the same amount of time.
\begin{figure}[h]
	\includegraphics[width=\textwidth]{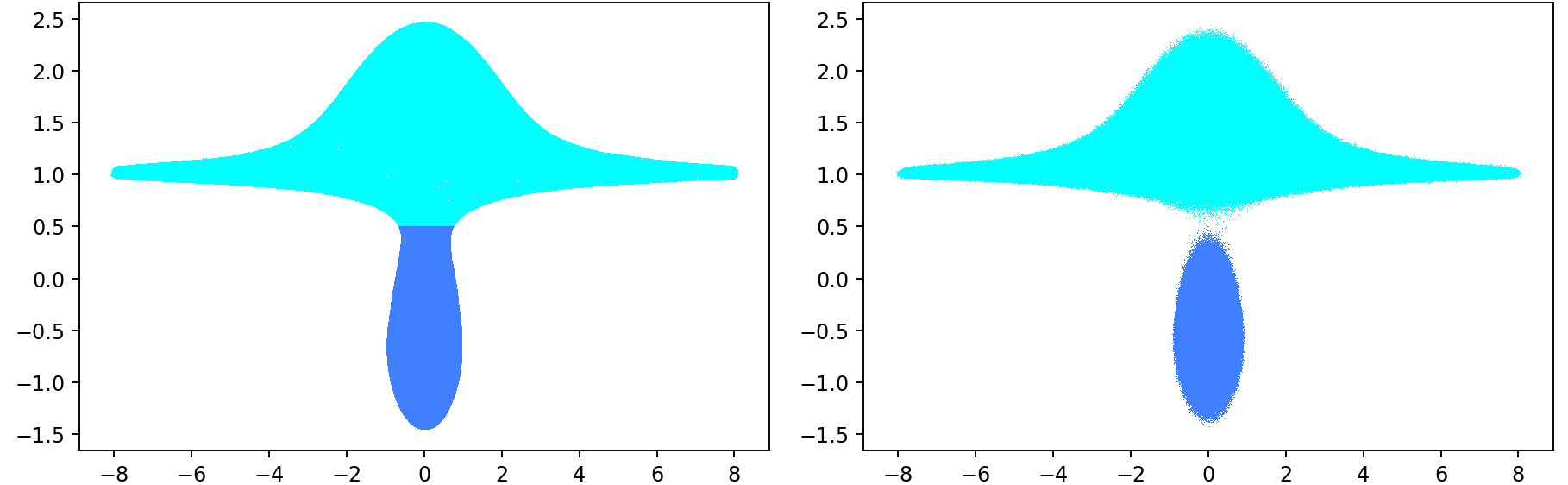}
	\captionof{figure}{QNR of $\A_{2}$ with the algorithm (left) and random vector sampling (right)}
	\label{fig:Hat}
\end{figure}
\end{example}

\begin{example}
  Let us consider the matrices
  \begin{equation*}
\A_{3} = \left[\begin{array}{cc|cc}0 & 0 & 0 & 1 \\ 0 & 1 & 2 & 3 \\ \hline 0 & -2 & -1 & 0 \\ -1 & -3 & 0 & 0 \end{array}\right],
\end{equation*}
cf. \cite[Example 1.3.10]{Tretter}, and
\begin{equation*}
	\A_{4} = \left[\begin{array}{ccc|cc}0 & 0 & 0 & 0 & 0 \\ 0 & 0 & 1+\iu  & 0 & 0 \\ 0 & 2\iu & 0 & 0 & 0 \\ \hline 0 & 0 & 0 & 0 & 0 \\ -1 & 2 & -2 & \iu & 0 \end{array}\right].
\end{equation*}
\begin{minipage}{0.5\textwidth}
	\includegraphics[width=\textwidth]{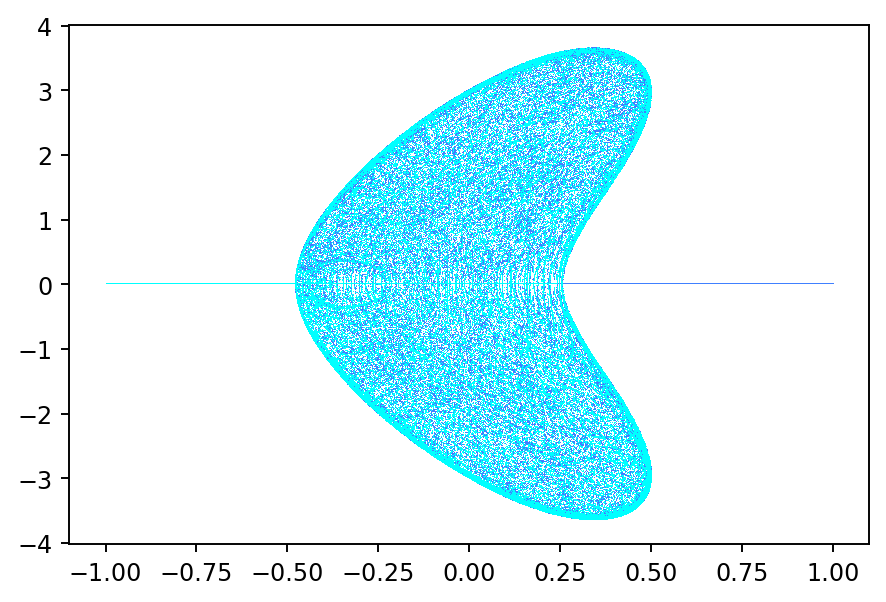}
	\captionof{figure}{QNR of $\A_{3}$}
	\label{fig:A3}
\end{minipage}
\begin{minipage}{0.5\textwidth}
	\includegraphics[width=\textwidth]{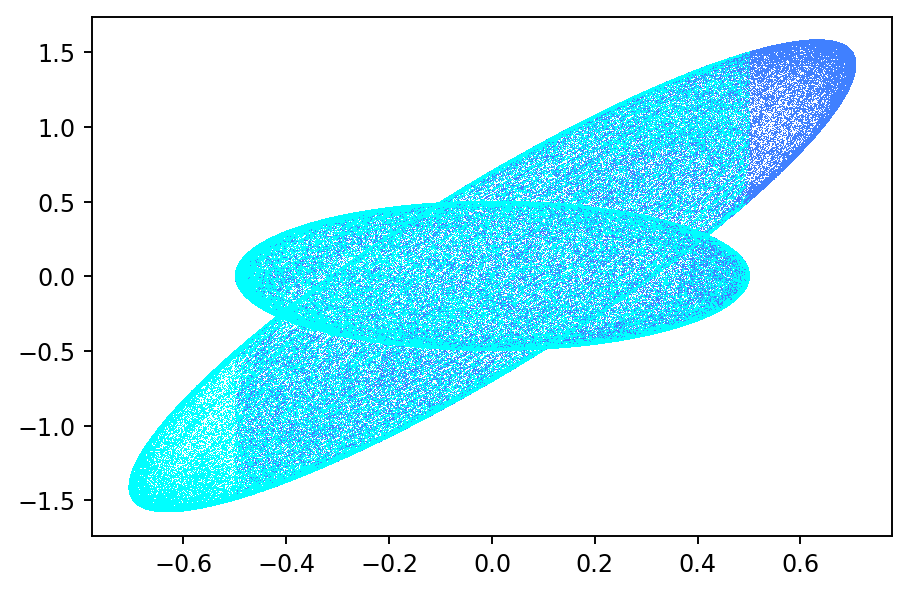}
	\captionof{figure}{QNR of $\A_{4}$}
	\label{fig:X}
\end{minipage}\\

\Cref{fig:A3,fig:X} demonstrate how the result of the algorithm can look like if the QNR consists of only one connected component and $\alpha$ is arbitrarily chosen to be $0$. They are the result of an execution of the algorithm for 15 minutes each.
\end{example}

\section{Concentration Phenomenon for Random Sampling}\label{sect:concentration}

As we see in the examples of \Cref{sect:examples} and especially in \Cref{fig:A2,fig:A2_large} the points in the quadratic numerical range computed via the random vector sampling method are very unequally spread and cluster in a small subset of each component. In this section, we will examine this phenomenon and prove that the probability of a sampling point to fall outside of a small neighborhood of the expected value decays exponentially with an increase of the dimension of the matrix when its norm stays constant.

\begin{prop}\label{thm:EW}
  Consider the probability spaces $(S_{\H_{i}},\mathcal{B}(S_{\H_{i}}),\sigma_{i})$, $i=1,2$, where $\sigma_{i}$ is the normalized surface measure. Let $M\colon\So\times\St \to \C^{2\times 2}$, $(x,y)\mapsto \begin{bmatrix} \langle Ax,x\rangle & \langle By,x\rangle \\ \langle Cx,y\rangle & \langle Dy,y\rangle \end{bmatrix}$. Then the expected value $\E M$ of $M$ is given by
  \begin{equation*}
    \E M = \begin{bmatrix} \frac{\trace(A)}{\dim(\H_1)} & 0 \\ 0 & \frac{\trace(D)}{\dim(\H_2)} \end{bmatrix}.
  \end{equation*}
\end{prop}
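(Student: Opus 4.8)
The plan is to compute each of the four entries of $\E M$ separately as an integral over $\So \times \St$ with respect to the product measure $\sigma_1 \otimes \sigma_2$, using Fubini to reduce the $2\times 2$ matrix-valued integral to scalar integrals over a single unit sphere. For the diagonal entry in position $(1,1)$, namely $\E[\langle Ax,x\rangle]$, I would write $\langle Ax,x\rangle = \sum_{j,k} A_{jk} x_k \overline{x_j}$ and integrate term by term over $x \in \So$ (the $y$-integration is trivial since this entry does not depend on $y$). The key fact is the standard moment identity for the normalized surface measure on the unit sphere in $\C^{n_1}$: by the rotational (unitary) invariance of $\sigma_1$,
\begin{equation*}
  \int_{\So} x_k \overline{x_j}\,\dd\sigma_1(x) = \frac{\delta_{jk}}{n_1},
\end{equation*}
where $n_1 = \dim(\H_1)$. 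This follows because the off-diagonal integrals vanish by averaging over the phase $x \mapsto \e^{\iu\theta_k} x$ applied to a single coordinate (which leaves $\sigma_1$ invariant but multiplies the integrand by $\e^{-\iu\theta_k}$ when $j \neq k$), while the diagonal integrals $\int |x_k|^2 \,\dd\sigma_1$ are all equal by coordinate-permutation invariance and sum to $\int \norm{x}^2\,\dd\sigma_1 = 1$, forcing each to equal $1/n_1$. Substituting back gives $\E[\langle Ax,x\rangle] = \sum_{j,k} A_{jk}\,\delta_{jk}/n_1 = \trace(A)/n_1$, and the analogous computation on $\St$ gives the $(2,2)$ entry as $\trace(D)/n_2$.

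For the off-diagonal entries I would show they vanish. Consider $\E[\langle By,x\rangle] = \int_{\So}\int_{\St} \sum_{j,k} B_{jk} y_k \overline{x_j}\,\dd\sigma_2(y)\,\dd\sigma_1(x)$, which by Fubini factors as $\sum_{j,k} B_{jk}\bigl(\int_{\So}\overline{x_j}\,\dd\sigma_1(x)\bigr)\bigl(\int_{\St} y_k\,\dd\sigma_2(y)\bigr)$. Each single-coordinate first moment $\int_{\So} \overline{x_j}\,\dd\sigma_1(x)$ is zero, again by the phase-rotation argument (replace $x_j$ by $\e^{\iu\theta} x_j$, which preserves $\sigma_1$ but scales the integrand), so the whole sum is zero; the same reasoning kills $\E[\langle Cx,y\rangle]$. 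Assembling the four entries yields the claimed formula for $\E M$.

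The only real content is establishing the two sphere-moment identities — the vanishing first moments and the value $1/n_i$ of the second moments — and this is where I would be most careful to justify the use of unitary/phase invariance of the normalized surface measure and the interchange of sum and integral (finite sums, so no issue) and of the two sphere integrals (Fubini, everything bounded and continuous). I do not expect any genuine obstacle; the proof is essentially a bookkeeping exercise once the moment identity $\int_{\So} x_k\overline{x_j}\,\dd\sigma_1 = \delta_{jk}/n_1$ is in hand. One could alternatively cite this identity as a well-known property of the complex sphere, but giving the short phase-and-permutation argument keeps the proof self-contained.
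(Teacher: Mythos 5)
Your proof is correct, but it takes a genuinely different route for the diagonal entries. The paper computes $\int_{\So}\langle Ax,x\rangle\,\dd\sigma_1$ by invoking the normalized Haar measure $\mu$ on the unitary group $\U(d)$: it writes $\trace(A)=\sum_k\langle AUe_k,Ue_k\rangle$ for every unitary $U$, integrates over $\U(d)$, and then uses Fubini together with the invariance of both $\mu$ and $\sigma_1$ under unitary transformations to collapse everything to $d\int_{\So}\langle Ax,x\rangle\,\dd\sigma_1$. You instead establish the explicit sphere moment identity $\int_{\So}x_k\overline{x_j}\,\dd\sigma_1=\delta_{jk}/n_1$ directly via phase-rotation invariance (for the off-diagonal moments) and coordinate-permutation invariance plus the constraint $\sum_k|x_k|^2=1$ (for the diagonal moments), then substitute into the coordinate expansion of $\langle Ax,x\rangle$. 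Both are standard and both rely on the unitary invariance of $\sigma_1$; yours is more elementary in that it avoids the Haar-measure machinery altogether, at the cost of a brief explicit moment computation. For the off-diagonal entries of $\E M$ the paper is slightly more economical, observing directly that the vector-valued integral $\int_{\So}x\,\dd\sigma_1(x)$ vanishes and then applying Fubini, whereas you decompose into scalar coordinate integrals; this is a minor stylistic difference and both versions are sound.
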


\begin{proof}
  The expected value is
  \begin{align*}
    \E M &= \int_{\So\times\St} \begin{bmatrix} \langle Ax,x\rangle & \langle By,x\rangle \\ \langle Cx,y\rangle & \langle Dy,y\rangle \end{bmatrix} \, \dd (\sigma_1\times\sigma_2)(x,y)\\
    &= \begin{bmatrix} \int_{\So}\langle Ax,x\rangle\,\dd\sigma_1(x) & \int_{\So\times\St}\langle By,x\rangle \, \dd(\sigma_{1}\times\sigma_{2})(x,y) \\ \int_{\So\times\St}\langle Cx,y\rangle\,\dd(\sigma_{1}\times\sigma_{2})(x,y) & \int_{\St}\langle Dy,y\rangle\,\dd \sigma_2(y) \end{bmatrix},
  \end{align*}
  where
  \begin{align*}
    \int_{\So\times\St}\langle By,x\rangle\,\dd(\sigma_{1}\times\sigma_{2})(x,y) &= \int_{\St}\langle By,\int_{\So}x\,\dd\sigma_{1}(x)\rangle\,\dd\sigma_{2}(y)\\
                                                                          &= \int_{\St}\langle By,0\rangle\,\dd\sigma_{2}(y)\\
    &= 0
  \end{align*}
  by Fubini's theorem and via a similar argumentation we also obtain
  \begin{equation*}
    \int_{\So\times\St}\langle Cx,y\rangle\,\dd(\sigma_{1}\times\sigma_{2})(x,y) = 0.
  \end{equation*}
  Let $d$ be the dimension of $\H_1$ and denote by $e_1,\dots,e_d$ an orthonormal basis of $\H_1$. Then the trace of $A$ is given by
  \begin{align*}
    \trace(A) &=  \sum_{k=1}^d \langle Ae_k,e_k\rangle \\
              &= \sum_{k=1}^d \langle AUe_k,Ue_k\rangle,
  \end{align*}
  where $U$ is an arbitrary unitary matrix. Denoting the normalized Haar measure on the unitary group $\U(d)$ by $\mu$ we have by Fubini's theorem
  \begin{align*}
    \trace(A) &= \int_{\U(d)}\sum_{k=1}^d \langle AUe_k,Ue_k\rangle\,\dd\mu(U) \\
              &= \int_{\So}\int_{\U(d)}\sum_{k=1}^d \langle AUe_k,Ue_k\rangle\,\dd\mu(U)\,\dd\sigma_1(x) \\
              &= d\int_{\So}\int_{\U(d)}\langle AUx,Ux\rangle\,\dd\mu(U)\,\dd\sigma_1(x)\\
              &= d\int_{\U(d)}\int_{\So}\langle AUx,Ux\rangle\,\dd\sigma_1(x)\,\dd\mu(U) \\
              &= d\int_{\So}\langle Ax,x\rangle\,\dd\sigma_1(x)
  \end{align*}
  due to the invariance of the Haar measure and $\sigma_1$ under unitary transformations.
  
  It follows analogously that
  \begin{equation*}
     \trace(D) = \dim(\H_2)\int_{\St}\langle Dy,y\rangle\,\dd\sigma_2(y),
  \end{equation*}
  which concludes the proof.
\end{proof}

\begin{defi}
  For two nonempty sets $K,L\subset \C$ we define the distance $\dist(K,L)$ via
  \begin{equation*}
    \dist(K,L) = \sup_{k\in K}(\inf_{l\in L}\norm{k-l})
  \end{equation*}
  and the Hausdorff-distance $\dH(K,L)$ via
  \begin{equation*}
    \dH(K,L) = \max\{\dist(K,L),\dist(L,K)\}.
  \end{equation*}
\end{defi}

Note, that $K\subset B_\eps(L)$ whenever $\dist(K,L)<\eps$ and $\dist(K,L)\leq\eps$ whenever $K\subset B_\eps(L)$.

\begin{lemma}\label{thm:M1M2}
  Let $M_{1},M_{2}\in\C^{2\times 2}$. Then
  \begin{equation*}
    \dH\big(\sigma(M_1),\sigma(M_2)\big) \leq \big((\norm{M_1}+\norm{M_2})\norm{M_1-M_2}\big)^{\frac{1}{2}}.
  \end{equation*}
\end{lemma}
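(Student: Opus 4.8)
The plan is to control the eigenvalues of a $2\times2$ matrix by its entries using the quadratic formula, and then exploit the Hölder-type continuity of the square root. For a $2\times2$ matrix $M$ with trace $\tau$ and determinant $\Delta$, the eigenvalues are $\tau/2 \pm \sqrt{(\tau/2)^2 - \Delta}$. So for $M_1,M_2$ with traces $\tau_1,\tau_2$ and determinants $\Delta_1,\Delta_2$, the eigenvalues are $\tau_j/2 \pm q_j$ with $q_j^2 = (\tau_j/2)^2 - \Delta_j$. The first step is to bound $\dH(\sigma(M_1),\sigma(M_2))$ by $\tfrac12|\tau_1-\tau_2| + |q_1 - q_2|$ (matching $+$ with $+$ and $-$ with $-$, or the crossed pairing, whichever is smaller — in fact one of the two pairings always works and this bound holds for it). Since $|\tau_1-\tau_2| = |\trace(M_1-M_2)| \le 2\norm{M_1-M_2}$ (trace of a $2\times2$ matrix is bounded by twice the operator norm, as each diagonal entry is at most $\norm{M}$ in modulus), the term $\tfrac12|\tau_1-\tau_2|$ is already $\le \norm{M_1-M_2}$, which is dominated by the claimed right-hand side once we check $\norm{M_1-M_2} \le \big((\norm{M_1}+\norm{M_2})\norm{M_1-M_2}\big)^{1/2}$, i.e.\ $\norm{M_1-M_2} \le \norm{M_1}+\norm{M_2}$, which is just the triangle inequality.

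The main work is bounding $|q_1 - q_2|$ where $q_j = \sqrt{z_j}$ with $z_j = (\tau_j/2)^2 - \Delta_j$. Here I would use the elementary inequality $|\sqrt{z_1} - \sqrt{z_2}| \le |z_1 - z_2|^{1/2}$, valid for any branch choices of the complex square root (this follows from $|\sqrt{z_1}-\sqrt{z_2}|\cdot|\sqrt{z_1}+\sqrt{z_2}| = |z_1 - z_2|$ together with $|\sqrt{z_1}-\sqrt{z_2}| \le |\sqrt{z_1}+\sqrt{z_2}|$ for the appropriately matched sign pairing). So it remains to estimate $|z_1 - z_2| = \big|(\tau_1/2)^2 - (\tau_2/2)^2 - (\Delta_1 - \Delta_2)\big|$. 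Writing $z_j = (\tau_j/2)^2 - \Delta_j$, note $z_j$ is, up to constants, a quadratic expression in the entries of $M_j$; the key algebraic identity is $z_j = \tfrac14(a_j - d_j)^2 + b_j c_j$ where $M_j = \left(\begin{smallmatrix} a_j & b_j \\ c_j & d_j\end{smallmatrix}\right)$ (cf.\ the radicand appearing in \Cref{thm:complex_root}). Then $z_1 - z_2$ factors into differences of entries times sums of entries, and each entry of $M_j$ is bounded by $\norm{M_j}$; a careful bookkeeping yields $|z_1 - z_2| \le (\norm{M_1}+\norm{M_2})\norm{M_1 - M_2}$ (possibly with a small constant that one absorbs — I expect the bound to come out cleanly, since $z_j$ is a degree-2 polynomial in the $M_j$ and its ``derivative'' is controlled by $\norm{M_1}+\norm{M_2}$ while the ``increment'' is $\norm{M_1-M_2}$). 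Combining, $|q_1 - q_2| \le \big((\norm{M_1}+\norm{M_2})\norm{M_1-M_2}\big)^{1/2}$, and adding the (smaller-order) trace term and re-absorbing it gives the claim.

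The step I expect to be the main obstacle is the sign-matching in the Hausdorff distance: one must verify that there is a pairing of the two eigenvalues of $M_1$ with the two eigenvalues of $M_2$ such that both $\tfrac12|\tau_1-\tau_2|+|q_1-q_2|$ bounds hold simultaneously, i.e.\ that the branch of the square root can be chosen consistently for $z_1$ and $z_2$ so that $|\sqrt{z_1}-\sqrt{z_2}|\le|z_1-z_2|^{1/2}$. The clean way around this is to observe that $\dH(\sigma(M_1),\sigma(M_2)) = \max\big(\min(|\lambda_1^+ - \lambda_2^+|, |\lambda_1^+ - \lambda_2^-|), \dots\big)$ is bounded by $\min$ over the two global pairings of $\max$ of the pointwise distances, and for each fixed pairing the estimate above applies with a suitable branch — so it suffices to exhibit one pairing that works, namely: pick any branch of $\sqrt{\cdot}$ for $z_1$, then choose the value of $\sqrt{z_2}$ closest to $\sqrt{z_1}$, which automatically satisfies $|\sqrt{z_1}-\sqrt{z_2}| \le |\sqrt{z_1}+\sqrt{z_2}|$ and hence the Hölder bound. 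The rest is the routine polynomial estimate on $|z_1-z_2|$ and triangle-inequality bookkeeping.
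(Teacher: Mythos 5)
Your route (quadratic formula for the eigenvalues, $\tfrac12$-H\"older continuity of $\sqrt{\cdot}$, polynomial estimate on the radicands) is genuinely different from the paper's, which instead uses the resolvent bound $\norm{(\lambda-M_1)^{-1}}\le \norm{\lambda-M_1}/|\det(\lambda-M_1)|$ valid for $2\times 2$ matrices (from Kato), together with $|\det(\lambda-M_1)|\ge\dist(\lambda,\sigma(M_1))^2$ and a Neumann-series argument giving $\norm{(\lambda-M_1)^{-1}}^{-1}\le\norm{M_1-M_2}$ for $\lambda\in\sigma(M_2)$. That route delivers the constant exactly with no bookkeeping.

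The gap in your proposal is the step you flagged as ``routine'': the polynomial estimate does \emph{not} come out cleanly. With $z_j=\tfrac14(a_j-d_j)^2+b_jc_j$, the best you get from entrywise bounds $|a_j|,|b_j|,\ldots\le\norm{M_j}$ and $|a_1-a_2|,\ldots\le\norm{M_1-M_2}$ is
\begin{equation*}
|z_1-z_2|\le\tfrac14\cdot 2\norm{M_1-M_2}\cdot 2(\norm{M_1}+\norm{M_2})+(\norm{M_1}+\norm{M_2})\norm{M_1-M_2}=2(\norm{M_1}+\norm{M_2})\norm{M_1-M_2},
\end{equation*}
so $|q_1-q_2|\le\sqrt{2}\bigl((\norm{M_1}+\norm{M_2})\norm{M_1-M_2}\bigr)^{1/2}$, and the trace term contributes an additional $\norm{M_1-M_2}\le\bigl((\norm{M_1}+\norm{M_2})\norm{M_1-M_2}\bigr)^{1/2}$. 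Your final bound is therefore $(1+\sqrt2)\bigl((\norm{M_1}+\norm{M_2})\norm{M_1-M_2}\bigr)^{1/2}$, not the stated one. This cannot be ``re-absorbed'': the constant $1$ in the lemma is sharp (take $M_1=0$ and $M_2=\smallmat{0&t\\ t&0}$, where both sides equal $t$), so any genuinely larger multiplicative constant gives a strictly weaker statement. The idea is salvageable, but only by avoiding the additive split: one should bound $\dist(\lambda,\sigma(M_1))^2$ for $\lambda\in\sigma(M_2)$ by $|\det(\lambda-M_1)|$ as a single quantity and estimate it globally, which is essentially what the resolvent inequality in the paper's proof accomplishes.
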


\begin{proof}
  For a $\lambda\in\varrho(M_1)$, \cite[p.~28]{Kato95} yields that
  \begin{equation*}
    \norm{(\lambda-M_{1})^{-1}} \leq \frac{\norm{\lambda-M_{1}}}{\left|\det(\lambda-M_{1})\right|} \leq \frac{\norm{\lambda-M_{1}}}{\dist(\lambda,\sigma(M_{1}))^2}
  \end{equation*}
  or in other words
  \begin{equation*}\label{eq:otherwords}
    \dist(\lambda,\sigma(M_{1})) \leq \left(\norm{\lambda-M_{1}}\norm{(\lambda-M_{1})^{-1}}^{-1}\right)^{\frac{1}{2}}.
  \end{equation*}
  If we further assume $\lambda\in\sigma(M_{2})$, we have
  \begin{equation*}
    \norm{\lambda-M_1} \leq \norm{M_2} + \norm{M_1}
  \end{equation*}
  on one hand and on the other hand we obtain
  \begin{equation*}
    \norm{(\lambda-M_{1})^{-1}}^{-1} \leq \norm{M_{2}-M_{1}}
  \end{equation*}
  because otherwise $\norm{(\lambda- M_{1})^{-1}}^{-1} > \norm{M_{2}-M_{1}}$ implies
  \begin{equation*}
    \norm{(M_{2}-M_{1})(\lambda-M_{1})^{-1}}<1
  \end{equation*}
  and therefore $I-(M_{2}-M_{1})(\lambda-M_{1})^{-1} = (\lambda-M_{2})(\lambda-M_{1})^{-1}$ is invertible by a Neumann series argument. This yields $\lambda\in\varrho(M_{2})$, which is a contradiction.

  Hence, we have
  \begin{equation*}
    \dist(\lambda,\sigma(M_{1})) \leq \big((\norm{M_{1}}+\norm{M_2})\norm{M_{1}-M_{2}}\big)^{\frac{1}{2}}
  \end{equation*}
  for every $\lambda\in\sigma(M_2)$ and we analogously obtain
  \begin{equation*}
    \dist(\lambda,\sigma(M_{2})) \leq \big((\norm{M_{1}}+\norm{M_2})\norm{M_{1}-M_{2}}\big)^{\frac{1}{2}}
  \end{equation*}
  for every $\lambda\in\sigma(M_1)$. Thus,
  \begin{align*}
    \dH\big(\sigma(M_1),\sigma(M_2)\big) &= \max\left\{\dist\big(\sigma(M_1),\sigma(M_2)\big),\dist\big(\sigma(M_2),\sigma(M_1)\big)\right\}\\
                                         &= \max\left\{\sup_{\lambda\in\sigma(M_1)}\dist(\lambda,\sigma(M_2)),\sup_{\lambda\in\sigma(M_2)}\dist(\lambda,\sigma(M_1))\right\}\\
                                         &\leq \big((\norm{M_1}+\norm{M_2})\norm{M_1-M_2}\big)^{\frac{1}{2}}. \qedhere
  \end{align*}
\end{proof}

\begin{theo}\label{thm:concentration}
  Denote by $S^{n-1}$ the $(n-1)$-dimensional sphere in $\R^n$ and let $f\colon S^{n-1}\to\R$ be a function with Lipschitz constant $L$. Then for all $\eps>0$ we have
  \begin{equation*}
    \sigma\left(\left| f(x)-\int_{S^{n-1}} f\,\dd\sigma\right| > \eps\right) \leq 4\exp\left(-\frac{\delta\eps^2n}{L^2}\right)
  \end{equation*}
  where $\sigma$ is the normalized surface measure on $S^{n-1}$ and $\delta>0$ an absolute constant.
\end{theo}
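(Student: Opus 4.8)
The plan is to derive this statement from the classical concentration of measure inequality (Lévy's isoperimetric inequality) on the Euclidean sphere, which asserts that a Lipschitz function on $S^{n-1}$ concentrates sharply around its median $m_f$. Specifically, I would invoke the standard estimate
\begin{equation*}
  \sigma\big(\{x\in S^{n-1} : |f(x)-m_f| > t\}\big) \leq C_1 \exp\left(-\frac{c_1 t^2 n}{L^2}\right)
\end{equation*}
for absolute constants $C_1, c_1 > 0$, valid for all $t > 0$; this follows from the isoperimetric inequality together with the bound $\sigma(A_t) \geq 1 - C_1 e^{-c_1 t^2 n}$ for the $t$-neighborhood of any set $A$ with $\sigma(A) \geq 1/2$. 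The only work then is to replace the median $m_f$ by the mean $\int_{S^{n-1}} f\,\dd\sigma$.

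First I would record that the mean and the median are close: integrating the tail bound gives
\begin{equation*}
  \left| \int_{S^{n-1}} f\,\dd\sigma - m_f \right| \leq \int_0^\infty \sigma\big(|f - m_f| > t\big)\,\dd t \leq C_1 \int_0^\infty \exp\left(-\frac{c_1 t^2 n}{L^2}\right)\dd t = \frac{C_2 L}{\sqrt{n}}
\end{equation*}
for an absolute constant $C_2$. Next, I would split into two cases according to the size of $\eps$ relative to $L/\sqrt{n}$. If $\eps \leq 2C_2 L/\sqrt{n}$, then $\delta\eps^2 n / L^2 \leq 4\delta C_2^2$, so choosing $\delta$ small enough that $4\exp(-4\delta C_2^2) \geq 1$ makes the asserted bound trivially true since probabilities are at most $1$. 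If $\eps > 2C_2 L/\sqrt{n}$, then $|f(x) - \int f\,\dd\sigma| > \eps$ forces $|f(x) - m_f| > \eps - C_2 L/\sqrt{n} > \eps/2$, and the Lévy bound gives
\begin{equation*}
  \sigma\big(|f - \textstyle\int f\,\dd\sigma| > \eps\big) \leq \sigma\big(|f - m_f| > \eps/2\big) \leq C_1 \exp\left(-\frac{c_1 \eps^2 n}{4L^2}\right).
\end{equation*}
Finally I would unify the two cases: taking $\delta \coloneqq \min\{c_1/4, \text{(the constant from the first case)}\}$ and enlarging the prefactor to $\max\{C_1, 4\} = 4$ (assuming $C_1 \leq 4$, which holds for the standard constant; otherwise absorb it into $\delta$ as well) yields the stated inequality with the universal prefactor $4$.

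The main obstacle is essentially bookkeeping rather than mathematical depth: one must be careful that the constants $C_1$, $c_1$ from whichever form of the concentration inequality is cited can genuinely be consolidated into a single absolute $\delta$ with prefactor exactly $4$, handling the regime of small $\eps$ (where the exponential bound alone would exceed $1$) by the trivial estimate. No structural difficulty arises — the only input beyond elementary manipulation is Lévy's concentration inequality on the sphere, which is standard. One should also note that $f$ here is only assumed Lipschitz, not smooth, so the isoperimetric form of the inequality (valid for arbitrary Lipschitz $f$) is the appropriate tool rather than any argument via gradients.
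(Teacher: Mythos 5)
Your proposal is correct, but it does more work than the paper: the paper's entire proof is a citation to \cite[Corollary~V.2]{MilmanSchechtman}, so no argument is given in the text at all. What you have written is essentially a reconstruction of the standard proof behind that reference --- L\'evy's isoperimetric inequality gives concentration around the median, the tail bound is integrated to show the median and mean differ by $O(L/\sqrt{n})$, and a case split on $\eps$ relative to $L/\sqrt{n}$ (trivial bound for small $\eps$, triangle inequality plus the median bound for large $\eps$) transfers the concentration to the mean while allowing the constants to be absorbed into $\delta$ and the prefactor $4$. This is a sound and complete argument; the only caveat worth noting is that absorbing a prefactor larger than $4$ into $\delta$ is only legitimate because the small-$\eps$ case has already been disposed of by the trivial bound, which you correctly arrange. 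So the mathematics checks out --- you have simply supplied the proof that the paper outsources to Milman--Schechtman.
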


\begin{proof}
\cite[Corollary V.2]{MilmanSchechtman}.
\end{proof}

\begin{theo}\label{thm:probability}
  Consider $M$ as in \Cref{thm:EW}.
  Then we have for all $\eps>0$
  \begin{equation*}
    \sigma_1\times\sigma_2\big( \dH(\sigma(M_{x,y}),\sigma(\E M)) > \eps\big) \leq 32\exp\left(-\beta \frac{\eps^4n_0}{\norm{\A}^4}\right)
  \end{equation*}
  where $\beta>0$ is an absolute constant and $n_0=\min\{\dim(\H_1),\dim(\H_2)\}$.
\end{theo}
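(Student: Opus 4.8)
The plan is to combine the deterministic perturbation bound of \Cref{thm:M1M2} with the concentration inequality of \Cref{thm:concentration}, applied entry-by-entry to the map $M_{x,y}$. First I would use \Cref{thm:M1M2} with $M_1 = M_{x,y}$ and $M_2 = \E M$ to obtain
\begin{equation*}
  \dH\big(\sigma(M_{x,y}),\sigma(\E M)\big) \leq \big((\norm{M_{x,y}}+\norm{\E M})\norm{M_{x,y}-\E M}\big)^{\frac{1}{2}}.
\end{equation*}
Since $M_{x,y}$ and $\E M$ are compressions of $\A$ and its block-diagonal part (each built from vectors of norm one), both operator norms are bounded by $\norm{\A}$, so the right-hand side is at most $\big(2\norm{\A}\norm{M_{x,y}-\E M}\big)^{1/2}$. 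Hence the event $\dH(\sigma(M_{x,y}),\sigma(\E M)) > \eps$ is contained in the event $\norm{M_{x,y}-\E M} > \eps^2/(2\norm{\A})$, and it suffices to bound the probability of the latter.

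Next I would control $\norm{M_{x,y}-\E M}$ by controlling its four entries: by equivalence of norms on $\C^{2\times 2}$ (e.g. the operator norm is at most the sum, or a fixed multiple of the max, of the moduli of the entries), it is enough to show each of $|\langle Ax,x\rangle - \trace(A)/\dim(\H_1)|$, $|\langle Dy,y\rangle - \trace(D)/\dim(\H_2)|$, $|\langle By,x\rangle|$, $|\langle Cx,y\rangle|$ is small with high probability, using \Cref{thm:EW} for the expected values. Each of these is (the real or imaginary part of) a function on the product of spheres $S_{\H_1}\times S_{\H_2} \cong S^{2n_1-1}\times S^{2n_2-1}$; splitting real and imaginary parts and applying the triangle inequality, it reduces to real-valued functions of $x$ alone, of $y$ alone, or of $(x,y)$ jointly. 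The key computation is that each such function is Lipschitz with constant a fixed multiple of $\norm{\A}$: for instance $x\mapsto \Re\langle Ax,x\rangle$ has Lipschitz constant $\leq 2\norm{A}\leq 2\norm{\A}$ on the unit sphere, and $(x,y)\mapsto\Re\langle By,x\rangle$ has Lipschitz constant $\leq 2\norm{B}\leq 2\norm{\A}$ with respect to the Euclidean metric on the product (viewed inside $\R^{2n_1}\times\R^{2n_2}\subset\R^{2n}$). Then \Cref{thm:concentration}, applied on the relevant sphere of dimension at least $2n_0 - 1$ (for the joint functions one uses the product sphere, whose concentration follows from the single-sphere statement applied in each factor, or directly from concentration on $S^{2n-1}$ with $n\geq n_0$), gives for each scalar function a bound of the form $4\exp(-c\,t^2 n_0/\norm{\A}^2)$ for deviations exceeding $t$.

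Finally I would assemble the pieces: choosing $t$ to be a fixed fraction of $\eps^2/\norm{\A}$ in each of the (at most eight) scalar deviation events, a union bound yields
\begin{equation*}
  \sigma_1\times\sigma_2\big(\norm{M_{x,y}-\E M} > \eps^2/(2\norm{\A})\big) \leq 32\exp\left(-c'\,\frac{(\eps^2/\norm{\A})^2 n_0}{\norm{\A}^2}\right) = 32\exp\left(-c'\,\frac{\eps^4 n_0}{\norm{\A}^4}\right),
\end{equation*}
which after renaming $c'$ to $\beta$ is exactly the claimed inequality; the $32 = 8\cdot 4$ absorbs the $4$ from \Cref{thm:concentration} and the number of scalar events. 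The main obstacle I anticipate is bookkeeping rather than conceptual: carefully verifying the Lipschitz constants (especially the factor $\norm{\A}$ rather than $\norm{A}+\norm{B}+\norm{C}+\norm{D}$, which follows since each block norm is dominated by $\norm{\A}$) and correctly transporting \Cref{thm:concentration} from a single sphere $S^{m-1}$ to the product $S^{2n_1-1}\times S^{2n_2-1}$ while keeping the dimension parameter at $n_0$ and the constants absolute. One clean way to handle the product is to note that a Lipschitz function on the product, restricted to each fixed fiber, is Lipschitz with the same constant, apply the single-sphere bound fiberwise, integrate, and use the triangle inequality to pass from fiberwise averages to the global average — at the cost only of doubling constants, which are absorbed into $\beta$.
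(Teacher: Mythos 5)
Your proposal follows essentially the same route as the paper: reduce the Hausdorff distance of spectra to $\norm{M_{x,y}-\E M}$ via \Cref{thm:M1M2}, control the four entries of $M_{x,y}-\E M$ by applying \Cref{thm:concentration} to real and imaginary parts (using \Cref{thm:EW} for the expectations and Lipschitz constants dominated by a fixed multiple of $\norm{\A}$, with a Fubini/fiberwise argument for the off-diagonal entries that depend on both $x$ and $y$), and assemble with a union bound giving the prefactor $32$ and the $\eps^4 n_0/\norm{\A}^4$ exponent. One small caution: the parenthetical alternative of applying \Cref{thm:concentration} ``directly on $S^{2n-1}$'' does not work, since $\So\times\St$ is a product of spheres (not a sphere) and $\sigma_1\times\sigma_2$ is not a spherical surface measure, but your fiberwise fallback is exactly what the paper does and is correct.
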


\begin{proof}
  We start by considering the function $\Re\langle A\cdot, \cdot\rangle\colon\So\to\R$ for which we have
  \begin{equation*}
    |\Re\langle Ax,x\rangle - \Re\langle Ay,y\rangle| \leq 2\norm{A}\norm{x-y}, \quad x,y\in\So.
  \end{equation*}
  Thus, from \Cref{thm:concentration} and \Cref{thm:EW}, we obtain
  \begin{equation*}
    \sigma_1\left(\left|\Re\langle Ax,x\rangle - \Re \frac{\trace(A)}{\dim(\H_1)}\right|>\eps\right) \leq 4\exp\left({-\delta \eps^2\frac{\dim(\H_1)}{4\norm{A}^2}}\right)
  \end{equation*}
  and analogously
  \begin{equation*}
    \sigma_1\left(\left|\Im\langle Ax,x\rangle - \Im \frac{\trace(A)}{\dim(\H_1)}\right|>\eps\right) \leq 4\exp\left({-\delta \eps^2\frac{\dim(\H_1)}{4\norm{A}^2}}\right)
  \end{equation*}
  with an absolute constant $\delta>0$. Combining both of these statements, we get
  \begin{align}\label{eq:estA}
    \begin{split}
      \sigma_1\left(\left|\langle Ax,x\rangle - \frac{\trace(A)}{\dim(\H_1)}\right|>\eps\right) &\leq \sigma_1\left(\left|\Re\langle Ax,x\rangle - \Re \frac{\trace(A)}{\dim(\H_1)}\right|>\frac{\eps}{\sqrt{2}}\right)\\
      & \quad  + \sigma_1\left(\left|\Im\langle Ax,x\rangle - \Im \frac{\trace(A)}{\dim(\H_1)}\right|>\frac{\eps}{\sqrt{2}}\right)\\
      &\leq 8\exp\left({-\delta \eps^2\frac{\dim(\H_1)}{8\norm{A}^2}}\right)
      \end{split}
  \end{align}
  and via the same argumentation with $D$ in place of $A$ we obtain
  \begin{equation}\label{eq:estD}
    \sigma_2\left(\left|\langle Dy,y\rangle - \frac{\trace(D)}{\dim(\H_2)}\right|>\eps\right) \leq 8\exp\left({-\delta \eps^2\frac{\dim(\H_2)}{8\norm{D}^2}}\right).
  \end{equation}
  In order to find an estimate like this for $\sigma_1\times\sigma_2(|\langle By,x\rangle|>\eps)$ as well we first fix $y\in\St$ and consider the function $g_y\colon\So\to\C$, $x\mapsto \langle By,x\rangle$. For this we have
  \begin{equation*}
    \sigma_1\left(\left|g_y(x)\right|>\eps\right) \leq 8\exp\left({-\delta \eps^2\frac{\dim(\H_1)}{2\norm{B}^2}}\right)
  \end{equation*}
  again by \Cref{thm:concentration} and a similar argumentation as before because $\E g_y = 0$. Considering $\Omega\coloneqq \{(x,y)\in \So\times\St \mid |\langle By,x\rangle|>\eps\}$, we then obtain
  \begin{align}\label{eq:estB}
    \begin{split}
      \sigma_1\times\sigma_2(|\langle By,x\rangle|>\eps) &= \int_{\St}\int_{\So} \mathbbm{1}_\Omega(x,y)\,\dd\sigma_1\,\dd\sigma_2\\
      &= \int_{\St}\sigma_1\left(\left|g_y(x)\right|>\eps\right)\,\dd\sigma_2\\
    &\leq \int_{\St} 8\exp\left({-\delta \eps^2\frac{\dim(\H_1)}{2\norm{B}^2}}\right) \,\dd\sigma_2\\
    &= 8\exp\left({-\delta \eps^2\frac{\dim(\H_1)}{2\norm{B}^2}}\right).
    \end{split}
  \end{align}
  The estimate
  \begin{equation}\label{eq:estC}
    \sigma_1\times\sigma_2(|\langle Cx,y\rangle|>\eps) \leq 8\exp\left({-\delta \eps^2\frac{\dim(\H_2)}{2\norm{C}^2}}\right)
  \end{equation}
  can be shown via the same arguments and we then combine \eqref{eq:estA}, \eqref{eq:estD}, \eqref{eq:estB} and \eqref{eq:estC} to obtain
  \begin{align}\label{eq:ProbNormDist}
    \begin{split}
    \sigma_1\times\sigma_2(\norm{M_{x,y}-\E M} > \eps) &\leq \sigma_1\times\sigma_2(\norm{M_{x,y}-\E M}_F > \eps)\\
    &\leq 32\exp\left({-\delta\eps^2\frac{n_0}{8\max\{4\norm{A}^2, \norm{B}^2, \norm{C}^2, 4\norm{D}^2\}}}\right)\\
    &\leq 32\exp\left({-\delta\eps^2\frac{n_0}{32\norm{\A}^2}}\right),
    \end{split}
  \end{align}
  where $\norm{\cdot}_F$ deontes the Frobenius norm.
  
  Next, we apply \Cref{thm:M1M2} to obtain
  \begin{equation*}
    \dH \big(\sigma(M_{x,y}),\sigma(\E M)\big) \leq \big((\norm{M_{x,y}}+\norm{\E M})\norm{M_{x,y}-\E M}\big)^{\frac{1}{2}},
  \end{equation*}
  where we have $\norm{M_{x,y}}\leq\norm{\A}$ because $M_{x,y} = P\A|_{\mathrm{ran}P}$ , where $P$ is the orthogonal projection to the two-dimensional subspace of $\H_1\times\H_2$ spanned by $\begin{bmatrix} x&0\end{bmatrix}^\intercal$ and $\begin{bmatrix} 0& y\end{bmatrix}^\intercal$. Moreover $\norm{\E M}\leq\norm{\E M}_F \leq \sqrt{2}\norm{\A}$ because the trace of a matrix is the sum of its eigenvalues. Thus,
  \begin{equation*}
    \dH \big(\sigma(M_{x,y}),\sigma(\E M)\big) \leq \big((1+\sqrt{2})\norm{\A}\norm{M_{x,y}-\E M}\big)^{\frac{1}{2}}
  \end{equation*}
  and we conclude by using \eqref{eq:ProbNormDist} that
  \begin{align*}
    \sigma_1\times\sigma_2\big(\dH(\sigma(M_{x,y}),\sigma(\E M)) > \eps\big) &\leq \sigma_1\times\sigma_2\left( \left(\big(1+\sqrt{2}\big)\norm{\A}\norm{M_{x,y}-\E M}\right)^{\frac{1}{2}} >\eps\right)\\
                                                &= \sigma_1\times\sigma_2\left( \norm{M(x,y)-\E M} > \frac{\eps^2}{\big(1+\sqrt{2}\big)\norm{\A}}\right)\\
                                                &\leq 32\exp\left(-\delta\frac{\eps^4}{(3+2\sqrt{2})\norm{\A}^2}\frac{n_0}{32\norm{\A}^2}\right)\\
                                                &= 32\exp\left(-\beta\frac{\eps^4n_0}{\norm{\A}^4}\right)
  \end{align*}
  with $\beta = \frac{\delta}{(96+64\sqrt{2})}$.
\end{proof}

\begin{remark}
  Equation \eqref{eq:estA} can also be interpreted in the context of the numerical range and yields an estimate for the probability of a point in $W(\A)$ that is computed via the random vector sampling method to fall outside of a small neighborhood of the expected value.

  In \cite{MartinssonTropp}, Martinsson and Tropp reformulated a result from \cite{GrattonTitley-Peloquin} such that an exponential bound for the deviation of the estimation of the trace of a matrix $A$ via $\langle Ax,x\rangle$ is obtained. The proof however relies on $A$ to be a self-adjoint positive semi-definite matrix.
\end{remark}

\begin{example}
  Let us consider the matrix
  \begin{equation*}
    \A_{5} = \left[\begin{array}{cccccc|cccccc}
                 2 & 0 & \dots & \dots & \dots & 0 & 1 & 0 & \dots & \dots & \dots & 0 \\
                 0 & \ddots & \ddots &  &  & \vdots & 0 & 0 & \ddots &  &  & \vdots \\
                 \vdots & \ddots & 2 & \ddots &   & \vdots & \vdots  & \ddots & \ddots & \ddots & & \vdots \\
                 \vdots &  & \ddots  & -2 & \ddots  & \vdots & \vdots   &  & \ddots & \ddots & \ddots & \vdots  \\
                 \vdots &  &  & \ddots & \ddots & 0 & \vdots &  &  & \ddots & \ddots & 0 \\
                 0 & \dots & \dots & \dots & 0 & -2 & 0 & \dots & \dots & \dots & 0 & 0 \\
                 \hline
                 0 & 0 & \dots & \dots & \dots & 0 & 1+\iu & 0 & \dots & \dots & \dots & 0 \\
                 0 & \ddots & \ddots &  &  & \vdots  & 0 & \ddots & \ddots &  & & \vdots \\
                 \vdots & \ddots & \ddots & \ddots &  & \vdots & \vdots  & \ddots & 1+\iu & \ddots & & \vdots \\
                 \vdots &  & \ddots  & \ddots & \ddots  & \vdots & \vdots  &  & \ddots & 1-\iu & \ddots & \vdots \\
                     \vdots &  &  & \ddots & 0 & 0 & \vdots &  & & \ddots & \ddots & 0 \\
                 0 & \dots & \dots & \dots & 0 & 1 & 0 & \dots & \dots & \dots & 0 & 1-\iu
               \end{array}\right]
           \end{equation*}
           with $A$, $B$, $C$ and $D$ equally sized. We have $\norm{\A_5} \approx 2.36$ independent of its dimension. In \Cref{fig:Concentration_sampling}, each plot depicts 10.000.000 points in the QNR of a version of $\A_5$ with $\dim(\A_5) = 2^{n}$, $n=2,\dots,7$, that were generated via the random vector sampling method.  The concentration phenomenon proven in \Cref{thm:probability} becomes clearly visible and with an increase of the dimension, the QNR seems to split into two disconnected components, which is not the case as \Cref{fig:Concentration_alg} shows. There, only $168.820$ points have been computed, but they give a much more accurate picture of the QNR.

\begin{figure}[h]
\centering
\includegraphics[width=.72\textwidth]{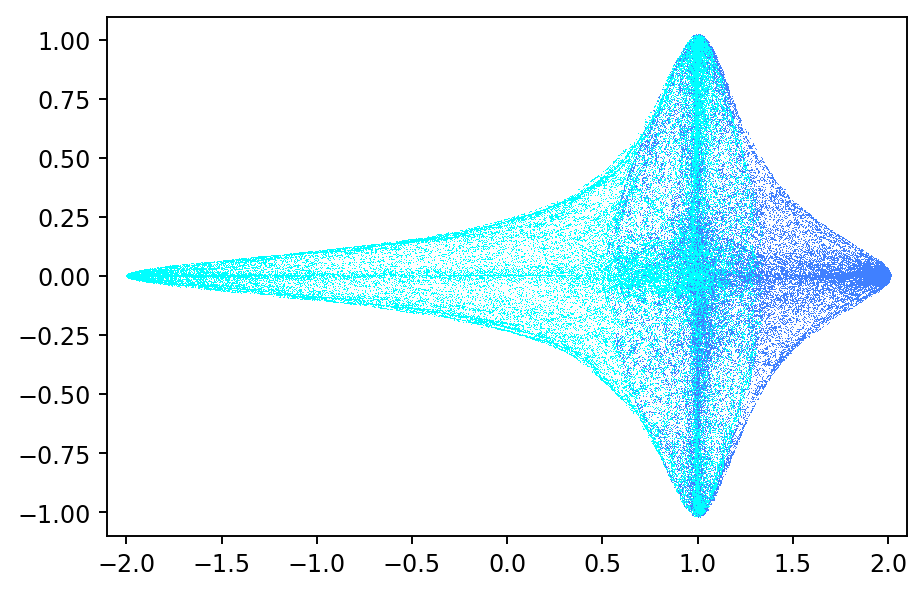}
\captionof{figure}{QNR of $\A_{5}$ with $\dim(\A_5) = 128$ computed with the algorithm in 5 minutes}
\label{fig:Concentration_alg}
\end{figure}
\begin{figure}[H]
\includegraphics[width=\textwidth]{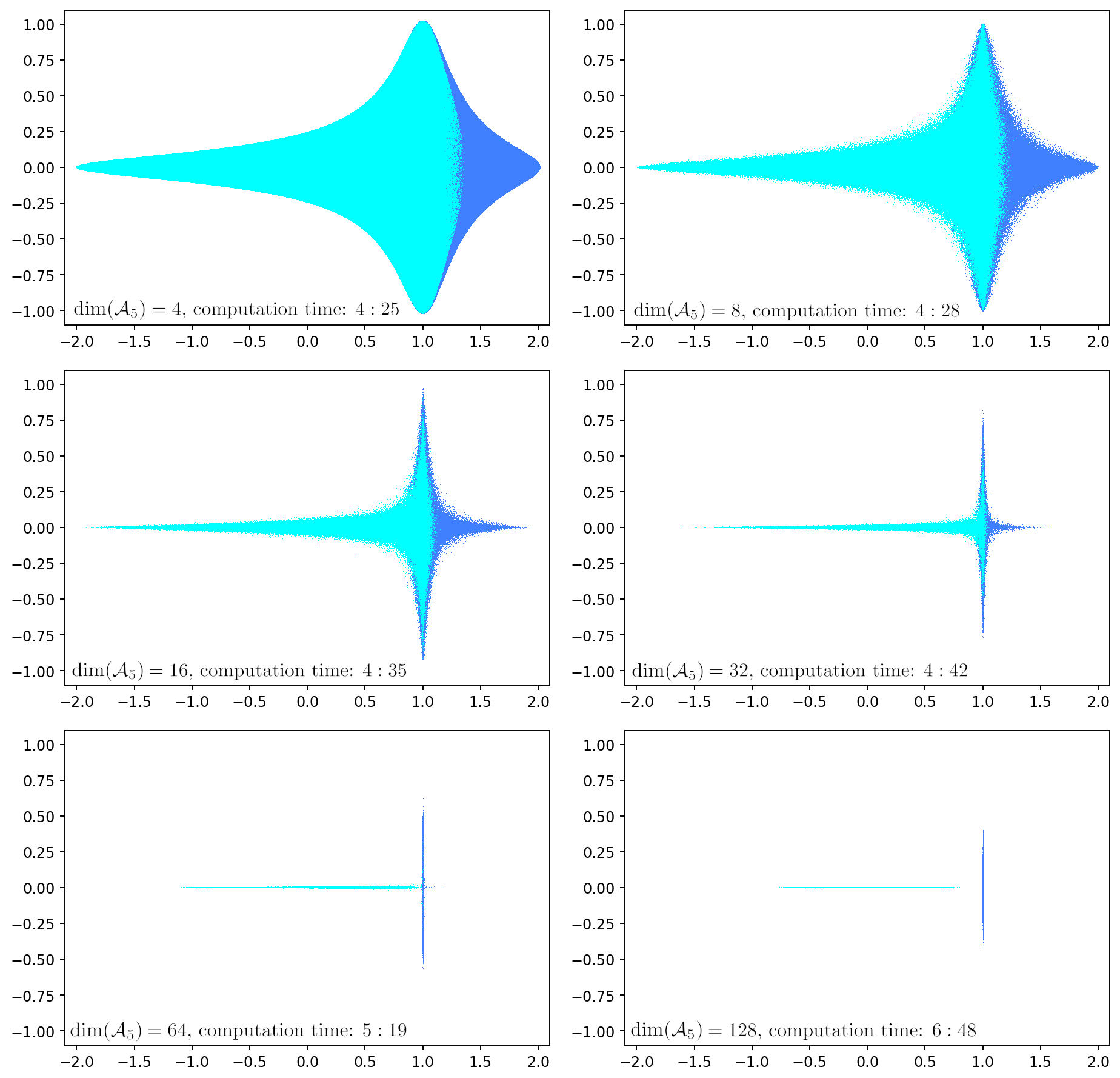}
\captionof{figure}{QNR of $\A_{5}$ computed with the random vector sampling method for different dimensions}
\label{fig:Concentration_sampling}
\end{figure}
\end{example}

\smallskip

\noindent\begin{center}\textbf{Acknowledgement}\end{center}
The authors thank Marco Marletta for suggesting the penalty term in the objective function of the algorithm and for hosting the second author during a research stay at Cardiff University. The authors furthermore thank Jochen Gl\"uck and Julian H\"olz from the University of Wuppertal for valuable input leading to the results of \Cref{sect:concentration} and Andreas Frommer and Karsten Kahl from the University of Wuppertal for fruitful discussions.

\bibliographystyle{abbrv}
\bibliography{mybib}

\begin{thebibliography}{10}

\bibitem{CowenHarel}
C.~Cowen and E.~Harel.
\newblock An effective algorithm for computing the numerical range.
\newblock https://www.math.iupui.edu/~ccowen/Downloads/33NumRange.pdf, 1995.

\bibitem{Fazlollahi}
M.~Fazlollahi.
\newblock Extension of quadratic numerical range of block operator matrices.
\newblock {\em Int. J. Contemp. Math. Sci.}, 3(29-32):1529--1534, 2008.

\bibitem{FrommerJacobKahlWyssZwaan}
A.~Frommer, B.~Jacob, K.~Kahl, C.~Wyss, and I.~Zwaan.
\newblock Krylov type methods for linear systems exploiting properties of the
  quadratic numerical range.
\newblock {\em Electron. Trans. Numer. Anal.}, 53:541--561, 2020.

\bibitem{GrattonTitley-Peloquin}
S.~Gratton and D.~Titley-Peloquin.
\newblock Improved bounds for small-sample estimation.
\newblock {\em SIAM J. Matrix Anal. Appl.}, 39(2):922--931, 2018.

\bibitem{GustafsonRao}
K.~E. Gustafson and D.~K.~M. Rao.
\newblock {\em Numerical range}.
\newblock Universitext. Springer-Verlag, New York, 1997.
\newblock The field of values of linear operators and matrices.

\bibitem{HornJohnson}
R.~A. Horn and C.~R. Johnson.
\newblock {\em Topics in matrix analysis}.
\newblock Cambridge University Press, Cambridge, 1994.
\newblock Corrected reprint of the 1991 original.

\bibitem{JacobTretterTrunkVogt}
B.~Jacob, C.~Tretter, C.~Trunk, and H.~Vogt.
\newblock Systems with strong damping and their spectra.
\newblock {\em Math. Methods Appl. Sci.}, 41(16):6546--6573, 2018.

\bibitem{Johnson}
C.~R. Johnson.
\newblock Numerical determination of the field of values of a general complex
  matrix.
\newblock {\em SIAM J. Numer. Anal.}, 15(3):595--602, 1978.

\bibitem{Kato95}
T.~{Kato}.
\newblock {\em {Perturbation theory for linear operators. Reprint of the corr.
  print. of the 2nd ed. 1980.}}
\newblock Berlin: Springer-Verlag, reprint of the corr. print. of the 2nd ed.
  1980 edition, 1995.

\bibitem{LangerMarkusMatsaevTretter}
H.~Langer, A.~Markus, V.~Matsaev, and C.~Tretter.
\newblock A new concept for block operator matrices: the quadratic numerical
  range.
\newblock {\em Linear Algebra Appl.}, 330(1-3):89--112, 2001.

\bibitem{LangerTretter}
H.~Langer and C.~Tretter.
\newblock Spectral decomposition of some nonselfadjoint block operator
  matrices.
\newblock {\em J. Operator Theory}, 39(2):339--359, 1998.

\bibitem{Linden}
H.~Linden.
\newblock The quadratic numerical range and the location of zeros of
  polynomials.
\newblock {\em SIAM J. Matrix Anal. Appl.}, 25(1):266--284, 2003.

\bibitem{LoiselMaxwell}
S.~Loisel and P.~Maxwell.
\newblock Path-following method to determine the field of values of a matrix
  with high accuracy.
\newblock {\em SIAM J. Matrix Anal. Appl.}, 39(4):1726--1749, 2018.

\bibitem{MartinssonTropp}
P.-G. Martinsson and J.~A. Tropp.
\newblock Randomized numerical linear algebra: foundations and algorithms.
\newblock {\em Acta Numer.}, 29:403--572, 2020.

\bibitem{MilmanSchechtman}
V.~D. Milman and G.~Schechtman.
\newblock {\em Asymptotic theory of finite-dimensional normed spaces}, volume
  1200 of {\em Lecture Notes in Mathematics}.
\newblock Springer-Verlag, Berlin, 1986.

\bibitem{muh-mar12}
A.~{Muhammad} and M.~{Marletta}.
\newblock {Approximation of the quadratic numerical range of block operator
  matrices.}
\newblock {\em {Integral Equations Oper. Theory}}, 74(2):151--162, 2012.

\bibitem{muh-mar13}
A.~{Muhammad} and M.~{Marletta}.
\newblock {A numerical investigation of the quadratic numerical range of
  Hain-L\"ust operators.}
\newblock {\em {Int. J. Comput. Math.}}, 90(11):2431--2451, 2013.

\bibitem{Tretter}
C.~{Tretter}.
\newblock {\em {Spectral theory of block operator matrices and applications.}}
\newblock London: Imperial College Press, 2008.

\end{thebibliography}

\end{document}